\documentclass{article}

\usepackage[all,ps,arc]{xy}  
\usepackage{amsmath,amssymb,latexsym}
\usepackage{amsfonts}
\usepackage{amsthm}
\usepackage{xcolor}

\newcommand{\point}{\ensuremath{\xymatrix{A\ar@<+.6ex>[r]^(.5){\alpha}
&B\ar@<+.6ex>[l]^(.5){\beta}}}}
\newcommand{\rg}{\ensuremath{\xymatrix{A\ar@<+1ex>[r]^{\alpha}\ar@<-1ex>[r]_{\gamma}&B\ar[l]|{\beta}}}}

%
%
%
\newtheorem{Theorem}{Theorem}[section]
\newtheorem{Lemma}[Theorem]{Lemma}
\newtheorem{Proposition}[Theorem]{Proposition}
\newtheorem{Definition}[Theorem]{Definition}

\newtheorem{Corollary}[Theorem]{Corollary}

\theoremstyle{remark}\newtheorem{Remark}[Theorem]{Remark}
\newtheorem{Example}[Theorem]{Example}

\newcommand{\bA}{\ensuremath{\mathbb A}}
\newcommand{\bB}{\ensuremath{\mathbb B}}

\newcommand{\bI}{\ensuremath{\mathbb I}}

\newcommand{\cA}{\ensuremath{\mathsf A}}
\newcommand{\cB}{\ensuremath{\mathsf B}}
\newcommand{\cC}{\ensuremath{\mathsf C}}

\newcommand{\cI}{\ensuremath{\mathsf I}}

\newcommand{\cX}{\ensuremath{\mathsf X}}

\newcommand\ACT{\ensuremath{\mathsf{ACT}}}

\newcommand\CG{\ensuremath{2\mathbb{G}\mathsf{p}}}
\newcommand\SCG{\ensuremath{\mathbb{S}\mathsf{ym}2\mathbb{G}\mathsf{p}}}
\newcommand\SMC{\ensuremath{\mathbb{S}\mathsf{ymMonCat}}}

\newcommand\Set{\ensuremath{\mathsf{Set}}}
\newcommand\Gp{\ensuremath{\mathsf{Gp}}}

\newcommand\OPEXT{\ensuremath{\mathsf{OPEXT}}}
\newcommand\ABEXT{\ensuremath{\mathsf{ABEXT}}}

\newcommand\Mod{\ensuremath{\mathsf{Mod}}}
\newcommand\Mon{\ensuremath{\mathsf{Mon}}}
\newcommand\CMon{\ensuremath{\mathsf{CMon}}}
\newcommand\Cat{\ensuremath{\mathsf{Cat}}}
\newcommand\CAT{\ensuremath{\mathbb{C}\mathsf{at}}}
\newcommand\Ab{\ensuremath{\mathsf{Ab}}}

\newcommand{\opFib}{\ensuremath{\mathsf{op}\mathbb{F}\mathsf{ib}}}

\newcommand{\tm}{\ensuremath{\!\times\!}}

\newdir{|>}{{}*!/8pt/@{|}*!/3.5pt/:(1,-.2)@^{>}*!/3.5pt/:(1,+.2)@_{>}}
\newdir{ >}{{}*!/-10pt/@{>}}
\newdir{ |>}{!/10pt/{}*!/4.5pt/@{|}*:(1,-.2)@^{>}*:(1,+.2)@_{>}}
\newcommand\toiso{\overset{\sim\,}{\smash{\xrightarrow{\ \, }}\rule{0pt}{0.25ex}}}

\begin{document}

%
%
%
%
%
%
%
%
%

\title
{On pseudofunctors sending\\groups to 2-groups}

\author{Alan S.~Cigoli\footnote{Dipartimento di Matematica ``Giuseppe Peano'', Universit\`a degli Studi di Torino, via Carlo Alberto, 10, 10123 Torino, Italy},\ \  Sandra Mantovani\footnote{Dipartimento di Matematica ``Federigo Enriques'', Universit\`a degli Studi di Milano, via C.~Saldini, 50, 20133 Milano, Italy},\ Giuseppe Metere\footnote{Dipartimento di Matematica e Informatica, Universit\`a degli Studi di Palermo, via Archirafi, 34, 90123 Palermo, Italy}}


\maketitle

\begin{abstract}
For a category \cB\ with finite products, we first characterize pseudofunctors from \cB\ to \CAT\ whose associated opfibration is cartesian monoidal. Among those, we then characterize the ones which extend to pseudofunctors from internal groups to 2-groups. If \cB\ is additive, this is the case precisely when the associated opfibration has groupoidal fibres.

\bigskip
\noindent{\scriptsize {\bf Keywords}: pseudofunctor, internal groups, 2-groups, monoidal opfibration}

\noindent{\scriptsize {\bf MSC}: 18C40;18D30;18G45;18M05}
\end{abstract}


\section{Introduction} \label{sec:intro}

Let $\cB$ be a category with finite products, and let $F\colon \cB\to \Set$ be a finite product preserving functor.

It is well known that if
$(M,m\colon M\times M\to M,e\colon I_\cB\to M)$ is a monoid in $\cB$, then $F(M)$ is a monoid (in $\Set$) with multiplication $F(m)$ and identity $F(e)$. In fact, that assignment determines a functor $\Mon(\cB)\to \Mon$. In the same way, and with the same hypotheses, one gets  functors $\Gp(\cB)\to \Gp$ and $\Ab(\cB)\to \Ab$. All of them fit the commutative diagram below, where the vertical arrows are the usual forgetful functors.
\[
\xymatrix@R=2ex{
\Ab(\cB)\ar[d]\ar[rrr]^{\Ab(F)}&&&\Ab\ar[d]\\
\Gp(\cB)\ar[d]\ar[rrr]^{\Gp(F)}&&&\Gp\ar[d]\\
\Mon(\cB)\ar[d]\ar[rrr]^{\Mon(F)}&&&\Mon\ar[d]\\
\cB\ar[rrr]_-F&&&\Set
}
\]
Though elementary, the previous observation has relevant consequences, e.g.\ when the forgetful functor $\Ab(\cB)\to \cB$ is an isomorphism. If this is the case, then $F$ factors through the category $\Ab$ of abelian groups. This happens, for instance, when $\cB$ is the category of $C$-modules, for a fixed group $C$, and $F$ is a cohomology functor $H^n(C,-)$. For a given $C$-module $B$, thanks to the fact that $H^n(C,-)$ factors through $\Ab$, the set $H^n(C,B)$ gains the usual abelian group structure (see \cite{BJ}, where the Baer sum of abelian extensions in a protomodular category is introduced in this way).

A natural question arises, whether similar results hold true when we consider a 2-categorical setting rather then the 1-categorical one described above. In fact, the previous situation may be regarded as a monoidal functor $F\colon (\cB,\times,I_\cB)\to (\Set,\times,\{\ast\})$. As a generalization, we can consider a lax monoidal pseudofunctor $F\colon \cB\to \CAT$, where $\cB$ is regarded as a locally discrete 2-category (i.e.\ a 2-category whose 2-cells are identities), and both $\cB$ and $\CAT$ are endowed with their cartesian 2-monoidal structure. Then $F$ sends monoids in $\cB$ to \emph{pseudomonoids} in $\CAT$, i.e.\ to monoidal categories (\cite{DS1997}, see also \cite{MV2020}).
However, in general, a lax monoidal pseudofunctor fails to send (abelian) groups in $\cB$ to their 2-dimensional counterpart, namely \emph{(symmetric) 2-groups}: (symmetric) monoidal groupoids where every object is weakly invertible with respect to the tensor product. As an example, one can consider the pseudofunctor $\mathsf{Sub}(-)\colon \Ab \to \CAT$, that assigns to every abelian group $A$ the lattice of its subobjects. This pseudofunctor is indeed lax monoidal with respect to the cartesian structures and its fibres inherit a monoidal structure which is given by the join of subobjects, but they are clearly not 2-groups.

In the present paper, we first observe that pseudofunctors $F\colon \cB \to \CAT$ are always endowed with an oplax monoidal structure $(L,L^1)$ with respect to the cartesian monoidal structures (see Proposition \ref{prop:oplax_for_free}). Among those, in Theorem \ref{theorem:adj_make_prod} we characterize the ones whose corresponding opfibration (obtained via the Grothendieck construction) is a cartesian monoidal opfibration. This happens precisely when $L$ and $L^1$ have right adjoints $R$ and $R^1$ respectively.

At this point, our goal is to understand under which conditions such pseudofunctors lift to pseudofunctors from internal groups to 2-groups. We call the latter \emph{groupal} pseudofunctors (see Definition \ref{def:gps}) and we fully answer this question in Theorem \ref{thm:Gp(B)}: $F$ is groupal if and only if the unit $\eta^1\colon 1 \Rightarrow R^1L^1$ is an isomorphism as well as the units $\eta^{A,A}\colon 1 \Rightarrow R^{A,A}L^{A,A}$, for each group object $A$ in \cB. This can be rephrased in terms of the opfibration $P$: thanks to Lemma \ref{lemma:unit_terminal} and Lemma \ref{lemma:unit_product}, the previous conditions hold as soon as, for each object $X$ with $P(X)$ a group object in \cB, the terminal map $\tau_X\colon X \to I_\cX$ and the diagonal map $\Delta_X\colon X \to X\times X$ are cocartesian. In particular, in Corollary \ref{cor:groupoidal_fibres} we prove that a cartesian monoidal opfibration with groupoidal fibres always satisfies these properties, and then the corresponding pseudofunctor is groupal.

As a special case of interest, we apply the previous results to a cartesian monoidal opfibration $P$ with additive base. The corresponding pseudofunctor factors through $\SCG$ if and only if diagonal and terminal maps are $P$-cocartesian (Theorem \ref{thm:additive}). This happens precisely when the fibres of $P$ are groupoids (Theorem \ref{thm:final}).

In Section \ref{sec:H^2}, we consider the pseudofunctor $\OPEXT(C,-)\colon \Mod(C) \to \CAT$, associating with any $C$-module $(B,\xi)$ (where $C$ is a group) the groupoid $\OPEXT(C,B,\xi)$  of abelian extensions of $C$, with kernel $B$ and induced action $\xi$. We can apply Theorem \ref{thm:final}, so that each such groupoid gets a symmetric 2-group structure, whose connected components  give rise to a group $\pi_0(\OPEXT(C,B,\xi))$ isomorphic to   $H^2(C,B,\xi)$ (and  $\pi_1(\OPEXT(C,B,\xi))$  isomorphic to  $Z^1(C,B,\xi)).$ In this way, we recover the result achieved by Vitale in \cite{Vitale03} in terms of cocycles. Another viewpoint can be found in Section 6.2 of \cite{CM16}.

As a further  application, in Section \ref{sec:torsors} we consider the category $\mathsf{LMSet}$ of monoid left actions, taking into account the fact that the forgetful functor $V\colon\mathsf{LMSet}\to\Mon$ turns out to be a cartesian monoidal opfibration. By Proposition \ref{prop:62}, its associated pseudofunctor  lifts to a lax symmetric monoidal  pseudofunctor $\ACT\colon \CMon\to \SMC$, where the tensor product in any fibre $\ACT(M)$ can be obtained by means of the so-called contracted product \cite{Breen} (see Proposition \ref{prop:contracted}).

$\ACT(M)$ fails to be a groupoid, even when $M$ is an (abelian) group. Thanks to Proposition \ref{prop:64}, it turns out that the largest possible restriction of $V$ over $\Ab$ with groupoidal fibres is given by $\overline V \colon \mathsf{ATors} \to \Ab$, where $\mathsf{ATors}$ is the full subcategory of $\mathsf{LMSet}$ given by abelian torsors. Then, thanks to Theorem \ref{thm:final}, the corresponding pseudofunctor $\mathsf{TORS}$ factors through symmetric 2-groups. Notice that, for an abelian group $B$, the structure we obtain on $\mathsf{TORS}(B)$ is a special case of the one studied for bitorsors by Breen in \cite{Breen}.

\section{Preliminaries} \label{sec:preliminaries}

We use unadorned capital letters for categories ($\cA, \cB$ etc.), and thick ones for 2-categories ($\bA, \bB$ etc.). $\mathbf{CAT}$ is the 3-category of 2-categories, pseudo\-functors, pseudonatural transformations and modifications. We denote by $\CAT$ the 2-category of locally small categories. Actually, we will avoid dealing with size issues, which can be resolved with suitable Grothendieck universes. In order to fix this point, we declare that the category of small categories $\Cat$ is an object of $\CAT$, which is in turn an object of $\mathbf{CAT}$. Composition of arrows (1-cells) $f, g$ is denoted $g\circ f$, or just $gf$, and the same notation is adopted for the whiskering of a 1-cell with a 2-cell. Horizontal composition of 2-cells is always denoted $\beta\circ\alpha$, since the juxtaposition is reserved to vertical composition of 2-cells.

Binary products  in a category $\cB$ are denoted as usual, with projections $\pi_1,\pi_2$, and the terminal map from an object $A$ is $\tau^\cB_A\colon A\to I_\cB$, with superscripts and subscripts omitted when clear from the context.
We adopt similar conventions for finite products in $\CAT$, where the terminal category with unique object $\star$ is denoted  by $\cI$. In fact, finite products in $\CAT$ are also 2-limits, in that they also satisfy a universal property with respect to 2-cells. We will refer to it as \emph{the 2-dimensional universal property of products}, to make a distinction from the usual (1-dimensional) universal property of products. In the case of binary products, for two categories $\cA$ and $\cB$, their product
$\xymatrix{\cA&\ar[l]_-{\pi_1}\cA\times\cB\ar[r]^-{\pi_2}&\cB}$ satisfies:
for any pair of natural transformations $\alpha\colon a\Rightarrow a'\colon \cC\to\cA$ and $\beta\colon b\Rightarrow b'\colon \cC\to\cB$, there exists a unique natural transformation $\gamma\colon c\Rightarrow c'\colon\cC\to\cA\times\cB$ such that $\pi_1\circ \gamma =\alpha$ and $\pi_2\circ \gamma =\beta$.

Let $\cB$ be a category. A pseudofunctor $F=(F,\phi,\phi^1)\colon \cB\to \CAT$ is a weak 2-functor, where $\cB$ is regarded as a locally discrete 2-category, and $\phi_{f,g}\colon F(g)\circ F(f)\toiso F(g\circ f)$, $\phi^1_B\colon 1_{F(B)}\toiso F(1_B)$ are natural isomorphisms, for $f,g$ composable arrows and $B$ object of $\cB$, satisfying coherence conditions.

\begin{Example}
The following is classical. Let $P\colon\cX\to\cB$ be a (Grothendieck) cloven opfibration. Then it is possible to associate with $P$ the pseudofunctor $F\colon \cB\to\CAT$ defined as follows. For any object $A$ of $\cB$, the image $F(A)$ is the fibre of $P$ over $A$. For an arrow $f\colon A\to B$ of $\cB$, $F(f)=f_*\colon F(A)\to F(B)$ is the change of base functor (determined by the cleavage), where for any $X\in F(A)$, $\hat{f}\colon X\to f_*(X)$ is a cocartesian lifting of $f$ at the object $X$.
The natural isomorphisms $(\phi,\phi^1)$ are consequences of the universal properties of cocartesian liftings.

\emph{Notation}: in the case of composable arrows $f,g$, for the codomain of a composition of liftings, we write $g_*f_*(X)$ rather than $g_*(f_*(X))$.
\end{Example}

Monoidal bicategories are the bicategorical version of monoidal categories, and they can be described as one-object tricategories (\cite{GPS1995}, see also \cite{DS1997}).  They can be defined according to different levels of strictness, concerning the underlying bicategory and the tensor product. For instance, we say that $(\bB,\otimes,I)$ is a  \emph{monoidal 2-category} if the underlying bicategory $\bB$ is in fact a 2-category.
The interested reader can also consult \cite{McC2000}, where monoidal 2-categories and their variations are described in details.
However, we do not need to recall the  definitions in full generality, since in this paper we will stick to the two evident examples  described below.

\begin{Example}
Let $(\cB, \otimes, I_\cB)$ be a symmetric monoidal category. Then, when $\cB$ is regarded as a locally discrete 2-category, it can be likewise considered as a symmetric monoidal 2-category.
\end{Example}

\begin{Example}
The 2-category $\CAT$ can be endowed with a cartesian monoidal structure given by the cartesian product and the terminal category, which makes it a symmetric monoidal 2-category.
\end{Example}

Morphisms between monoidal bicategories are homomorphisms of bicategories that preserve the structures up to coherent pseudonatural transformations. Once again, we describe explicitly only the case of interest in our context.
We start by fixing the notation for an oplax monoidal pseudofunctor $\cB\to \CAT$, where the monoidal structure on $\cB$ is the cartesian one.

Let $\cB$ be a category with finite products. The associator is induced by the universal property of products, and it is written $\alpha_{A,B,C}\colon (A\times B)\times C\to A\times(B\times C)$, left and right unitors are the projections $\pi_1\colon A\times I_\cB\to A$ and $\pi_2\colon I_\cB\times A\to A$. The same conventions are adopted for the constraints of the cartesian monoidal structure of $\CAT$.

An oplax monoidal pseudofunctor
\[
F=(F,L,L^1,\omega,\zeta,\xi)\colon (\cB,\times,I_\cB)\longrightarrow (\CAT,\times,\cI)
\]
is given by a pseudofunctor $F\colon\cB\to \CAT$, together with pseudonatural transformations
\begin{equation} \label{diag:oplax_monoidal}
\begin{aligned}
\xymatrix{
\cB\times\cB\ar[d]_{F\times F}\ar[r]^-{\times}
&\cB\ar[d]^{F}\ar@{}[dl]|(.35){}="1"|(.65){}="2"
\ar@{=>}"1";"2"^{L}
\\
\CAT\times\CAT\ar[r]_-{\times}
&\CAT}
\qquad
\xymatrix@C=10ex{
\bI\ar[r]^{I_{\cB}}\ar[dr]_{\cI}
&\cB\ar[d]^{F}\ar@{}[dl]|(.19){}="1"|(.42){}="2"
\ar@{=>}"1";"2"^(.3){L^1}
\\
&\CAT
}
\end{aligned}
\end{equation}
(where $\bI$ is the terminal 2-category) with components the functors
\begin{equation}\label{diag:oplax_monoidal_components}
L^{A,B}\colon F(A\times B)\to F(A)\times F(B)\qquad L^1\colon F(I_{\cB})\to \cI
\end{equation}
and invertible modifications with components
\begin{equation}\label{diag:oplax_monoidal_mod_1}
\begin{aligned}
\xymatrix@C=16ex{
F((A\times B)\times C)\ar[d]_{L^{A\times B, C}}\ar[r]^{F(\alpha_{A,B,C})}
&F(A\times(B\times C))\ar[d]^{L^{A,B\times C}}\ar@{}[ddl]|(.4){}="1"|(.6){}="2"
\ar@{=>}"1";"2"^(.3){\ \ \omega_{A,B,C}}_{\cong}
\\
F(A\times B)\times F(C)\ar[d]_{L^{A,B}\times 1}
&F(A)\times F(B\times C)\ar[d]^{1\times L^{B,C}}
\\
(F(A)\times F(B))\times F(C)\ar[r]_{\alpha_{F(A),F(B),F(C)}}
&F(A)\times (F(B)\times F(C))
}
\end{aligned}
\end{equation}
\begin{equation}\label{diag:oplax_monoidal_mod_2}
\begin{aligned}
\xymatrix@C=10ex{
F(A\times I_\cB)\ar[d]_{L^{A,I_\cB}}\ar[rr]^{F(\pi_1)}
&&F(A)\ar[d]^1
\ar@{}[dll]|(.4){}="1"|(.6){}="2"
\ar@{=>}"1";"2"^(.3){\ \ \zeta_{A}}_{\cong}
\\
F(A)\times F(I_\cB)\ar[r]_{1\times L^1}
&F(A)\times \cI\ar[r]_{\pi_1}
&F(A)
}
\end{aligned}
\end{equation}
\begin{equation}\label{diag:oplax_monoidal_mod_3}
\begin{aligned}
\xymatrix@C=10ex{
F(I_\cB\times A)\ar[d]_{L^{I_\cB,A}}\ar[rr]^{F(\pi_2)}
&&F(A)\ar[d]^1
\ar@{}[dll]|(.4){}="1"|(.6){}="2"
\ar@{=>}"1";"2"^(.3){\ \ \xi_{A}}_{\cong}
\\
F(I_\cB)\times F(A)\ar[r]_{ L^1\times 1}
&\cI\times F(A)\ar[r]_{\pi_2}
&F(A)
}
\end{aligned}
\end{equation}
satisfying suitable coherence conditions.

If we write $\sigma_{A,B}\colon A\times B\to B\times A$ for the braiding induced by a cartesian monoidal structure, the oplax monoidal pseudofunctor $F$ is oplax  \emph{symmetric} monoidal if it is endowed with an invertible modification $\theta$ with components
\begin{equation}\label{diag:oplax_monoidal_mod_4}
\begin{aligned}
\xymatrix@C=10ex{
F(A\times B)\ar[r]^{F(\sigma_{A,B})}\ar[d]_{L^{A,B}}
&F(B\times A)\ar[d]^{L^{B,A}}
\ar@{}[dl]|(.4){}="1"|(.6){}="2"
\ar@{=>}"1";"2"^(.3){\ \ \theta_{A,B}}_{\cong}
\\
F(A)\times F(B)\ar[r]_{\sigma_{F(A),F(B)}}
&F(B)\times F(A)
}
\end{aligned}
\end{equation}
satisfying suitable coherence conditions.

Dually, a lax monoidal pseudofunctor
\[
F=(F,R,R^1,\omega',\zeta',\xi')\colon (\cB,\times,I_\cB)\longrightarrow (\CAT,\times,\cI)
\]
is given by a pseudofunctor $F\colon\cB\to \CAT$, together with pseudonatural transformations
\begin{equation}\label{diag:lax_monoidal}
\begin{aligned}
\xymatrix{
\cB\times\cB\ar[d]_{F\times F}\ar[r]^-{\times}
&\cB\ar[d]^{F}\ar@{}[dl]|(.35){}="1"|(.65){}="2"
\ar@{=>}"2";"1"_{R}
\\
\CAT\times\CAT\ar[r]_-{\times}
&\CAT}
\qquad
\xymatrix@C=10ex{
\bI\ar[r]^{I_{\cB}}\ar[dr]_{\cI}
&\cB\ar[d]^{F}\ar@{}[dl]|(.19){}="1"|(.42){}="2"
\ar@{=>}"2";"1"_(.6){R^1}
\\
&\CAT
}
\end{aligned}
\end{equation}
with components the functors
\begin{equation}\label{diag:lax_monoidal_components}
R^{A,B}\colon  F(A)\times F(B)\to F(A\times B)\qquad R^1\colon  \cI\to F(I_{\cB})
\end{equation}
and invertible modifications $\omega', \zeta', \xi'$ whose components can be easily guessed from those of the oplax case. Similarly, one defines a lax symmetric monoidal pseudofunctor.

Pseudomonoids in a monoidal bicategory are introduced in Section~3 of \cite{DS1997} (see also \cite{McC2000} for a more contemporary and slightly more general account). However, we will need some details just for the case of pseudomonoids in a monoidal 2-category. A \emph{pseudomonoid} in a monoidal $2$-category $(\bB,\otimes,I)$ is an internal monoid object in $\bB$, such that the monoid axioms hold only up to invertible 2-cells. This means that it is an object $M$ of $\bB$, together with 1-cells $m\colon M\otimes M\to M$ and $e\colon I\to M$ and invertible 2-cells
\begin{equation} \label{diag:pseudomonoid1}
\begin{aligned}
\xymatrix{
(M\otimes M)\otimes M\cong M\otimes (M\otimes M)\ar[d]_{m\otimes 1}\ar[r]^-{1\otimes m}
&M\otimes M\ar[d]^m
\ar@{}[dl]|(.4){}="1"|(.6){}="2"
\ar@{=>}"1";"2"^(.3){\alpha}_{\cong}
\\
M\otimes M\ar[r]_{m}
&M
}
\end{aligned}
\end{equation}
\begin{equation} \label{diag:pseudomonoid2}
\begin{aligned}
\xymatrix{
M\cong I\otimes M\ar[r]^-{1\otimes e}\ar[dr]_1
&M\otimes M\ar[d]^{m}
\ar@{}[dl]|(.2){}="1"|(.4){}="2"
\ar@{=>}"1";"2"^(.3){\lambda}_{\cong}
\ar@{}[dr]|(.2){}="1"|(.4){}="2"
\ar@{=>}"1";"2"_(.3){\rho}^{\cong}
&M\otimes I\cong M\ar[l]_-{e\otimes 1}\ar[dl]^1
\\&M&
}
\end{aligned}
\end{equation}
satisfying coherence conditions. A (op)lax morphism between pseudomonoids $M$ and $N$ is a 1-cell $f\colon M\to N$ together with 2-cells that express the (op)lax preservation of the internal multiplication and unit.

Most of the pseudomonoids considered in this paper are symmetric. In order to define symmetric pseudomonoids, the ambient 2-category cannot be only monoidal, but must be \emph{sylleptic monoidal}. We do not enter into detail, since in the cartesian monoidal cases we consider, the sylleptic structure is the canonical symmetric one, as determined by universal property of products. However, the only case we shall need in order to make explicit calculations are described in the following two examples.

\begin{Example}
Let $\cB$ be a category with finite products, considered as a monoidal 2-category with its cartesian symmetric monoidal structure. A (symmetric) pseudomonoid in $(\cB, \times, I_\cB)$  is nothing but a (commutative) mo-noid object in $\cB$, a (op)lax morphism is just a morphism of internal monoids.
\end{Example}

\begin{Example}
A (symmetric) pseudomonoid in the cartesian monoidal 2-category $(\CAT,\times,\cI)$ is precisely a (symmetric) monoidal category. In this case, a (op)lax morphism is a (op)lax monoidal functor.
\end{Example}

In \cite{MV2020}, the authors characterize monoidal fibrations (defined in \cite{Shu2008}) as pseudomonoids internal to the 2-category of fibrations (in fact they \emph{define} monoidal fibrations in this way, and we follow such convention), and then they show how the well-known biequivalence between fibrations and indexed categories extends to internal pseudomonoids in such monoidal 2-categories. Here we recall their definition/characterization, but spelled out in the opfibration case, which is more suitable for our purposes.
\begin{Proposition}\label{prop:Prop_3.2_MV}[see Proposition~3.2 of \cite{MV2020}]
The following statements are equivalent:
\begin{itemize}
\item[i.] The functor $P\colon \cX\to\cB$ is a monoidal opfibration, i.e.\ it is a pseudomonoid in the 2-category $\opFib$ of opfibrations.
\item[ii.] The categories $\cX$ and $\cB$ are monoidal, the opfibration $P$ is a strict monoidal functor, and the tensor product of cocartesian maps is cocartesian.
\end{itemize}
\end{Proposition}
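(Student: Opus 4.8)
The plan is to prove the equivalence by unwinding the definition of a pseudomonoid in the cartesian monoidal 2-category $\opFib$ and matching its data, piece by piece, with the structure described in (ii). I will rely on three standard facts about this 2-category: the cartesian product of two opfibrations $P\colon\cX\to\cB$ and $Q\colon\cY\to\cD$ is computed pointwise as $P\times Q\colon\cX\times\cY\to\cB\times\cD$, with monoidal unit the identity opfibration on the terminal category $\cI$; a 1-cell of $\opFib$ is a square of functors commuting strictly with the projections to the bases whose total-category component preserves cocartesian arrows, while a 2-cell is a pair of natural transformations compatible over the base; and, by the Examples above, pseudomonoids in $\CAT$ are precisely monoidal categories. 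I will also use the elementary observation that in the product opfibration $P\times P$ the cocartesian arrows are exactly the pairs of $P$-cocartesian arrows.

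To prove that (i) implies (ii), I would first apply the two projections $\opFib\to\CAT$ sending an opfibration to its total category and to its base. Both preserve cartesian products, hence are strict monoidal 2-functors, and therefore carry the pseudomonoid $P$ to pseudomonoids in $\CAT$, i.e.\ to monoidal structures on $\cX$ and on $\cB$. Writing the multiplication and unit 1-cells as $m=(m_\cX,m_\cB)$ and $e=(e_\cX,e_\cB)$, strict commutation with the projections forces $P\circ m_\cX=m_\cB\circ(P\times P)$ and $P\circ e_\cX=e_\cB$, while $P$-compatibility of the structural 2-cells $\alpha,\lambda,\rho$ forces $P$ to send associator to associator and unitors to unitors; thus $P$ is a strict monoidal functor. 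Finally, the requirement that $m$ be a 1-cell of $\opFib$ says exactly that $m_\cX$, the tensor of $\cX$, preserves cocartesian arrows, which by the observation on product opfibrations is the assertion that the tensor of two cocartesian maps is cocartesian.

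For the converse, I would reverse these identifications. Set $m:=(\otimes_\cX,\otimes_\cB)$ and $e:=(I_\cX,I_\cB)$: these are genuine 1-cells of $\opFib$ because strict monoidality of $P$ makes the relevant squares commute on the nose, because $\otimes_\cX$ preserves cocartesian arrows by hypothesis (again via the characterization of cocartesian arrows in $P\times P$), and because the functor $\cI\to\cX$ selecting the unit object trivially preserves cocartesian arrows. The associativity and unit isomorphisms of $\cX$ and $\cB$, taken in pairs, are compatible over $P$ precisely because a strict monoidal functor preserves these constraints, so they define invertible 2-cells of $\opFib$; and the pentagon and triangle axioms, being equalities of 2-cells, can be verified componentwise, where they hold because $\cX$ and $\cB$ are monoidal. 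This produces the required pseudomonoid structure.

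The bulk of the argument is a dictionary between pseudomonoid data and monoidal-category data, and is routine. The one step deserving genuine care is the description of 1-cells of $\opFib$ out of a product opfibration: one must check both that the cartesian product in $\opFib$ is the pointwise one and that a functor $\cX\times\cX\to\cX$ lying over $\cB\times\cB\to\cB$ preserves cocartesian arrows exactly when it sends componentwise-cocartesian pairs to cocartesian arrows. This rests on the lemma that the cocartesian arrows of a product opfibration are the pairs of cocartesian arrows, which is the only point where more than bookkeeping is involved. Since the statement is quoted as Proposition~3.2 of \cite{MV2020}, I would ultimately defer the detailed verification of the coherence conditions to that reference.
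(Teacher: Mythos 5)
Your proposal is correct, but there is nothing in the paper to compare it against: the paper does not prove this proposition at all. It is stated as a recalled result, quoted (in opfibration form) from Proposition~3.2 of \cite{MV2020}, and the authors defer entirely to that reference. Your reconstruction --- unwinding the pseudomonoid data in the cartesian 2-category $\opFib$, using that the two projection 2-functors $\opFib\to\CAT$ preserve 2-products and hence send pseudomonoids to monoidal categories, and translating the 1-cell condition on the multiplication into preservation of cocartesian arrows --- is essentially the argument given in the cited reference, so your ultimate deferral of the coherence bookkeeping to \cite{MV2020} lands in the right place. You also correctly isolate the one step with real content: the lemma that an arrow of $\cX\times\cX$ is cocartesian for the product opfibration $P\times P$ exactly when both components are $P$-cocartesian. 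That lemma is true (one direction is immediate; for the other, pair the test arrow with an identity in the second coordinate and use uniqueness), and it is indeed what makes the condition ``the tensor of two cocartesian maps is cocartesian'' equivalent to ``the multiplication 1-cell lives in $\opFib$''. The only caveat worth flagging is that your claim that strict commutation with the projections plus compatibility of the structure 2-cells yields a \emph{strict} monoidal functor implicitly uses that a functor preserving tensor, unit, and all constraints on the nose satisfies the monoidal-functor axioms with identity coherence data; this is true and routine, but it is the hinge of the dictionary and deserves the explicit check you gesture at.
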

Furthermore, all of this can be translated in terms of pseudofunctors (see  Proposition~3.7 of \cite{MV2020}): the pseudofunctor $F$ canonically associated with an opfibration $P$ satisfying the equivalent conditions above is endowed  with a lax monoidal structure
\[
F=(F,R,R^1,\dots)\colon (\cB,\otimes,I_\cB)\longrightarrow (\CAT,\times,\cI)\,.
\]
We recall from \cite{MV2020} that this correspondence extends to an equivalence of 2-categories.

Starting with a lax monoidal pseudofunctor $F$ as above, it is easy to see that it sends (commutative) monoids in $\cB$ to (symmetric) monoidal categories. In order to fix the notation, we recall such a procedure. Let $(M,m,e)$ be a monoid in $\cB$. Then,  $F(M)$ becomes a monoidal category with tensor product and unit object
\begin{equation}\label{diag:F(M)tensor}
\xymatrix{
\otimes\colon F(M)\times F(M)\ar[r]^-{R^{M,M}}&F(M\otimes M)\ar[r]^-{F(m)}&F(M)
}
\end{equation}
\begin{equation}\label{diag:F(M)unit}
\xymatrix{
\cI\ar[r]^-{R^1}&F(I_\cB)\ar[r]^-{F(e)}&F(M)
}
\end{equation}
Also, for a morphism of (commutative) monoids  $f\colon (M,m,e)\to (M',m',e')$ in $\cB$, $F(f)$ is a strong (symmetric) monoidal functor $F(M) \to F(M')$.

\section{When oplax monoidal is for free}

Let $\cB$ be a category with finite products, $P\colon \cX\to \cB$ an opfibration and $F\colon \cB\to \CAT$ its associated pseudofunctor.

\begin{Lemma} \label{lemma:oplax_unit}
The trivial functor $L^1\colon F(I_\cB)\to \cI$ defines a pseudonatural transformation $L^1$  as in the diagram on the right of \eqref{diag:oplax_monoidal}.
\end{Lemma}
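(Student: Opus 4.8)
The plan is to unwind what a pseudonatural transformation over the terminal 2-category $\bI$ amounts to, and to exploit the fact that its target is the terminal category $\cI$ of $\CAT$, which forces every piece of data to be trivial.

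First I would identify the two 1-cells $\bI\to\CAT$ bounding the triangle on the right of \eqref{diag:oplax_monoidal}: the composite $F\circ I_\cB$, which selects the category $F(I_\cB)$, and the functor there denoted $\cI$, which selects the terminal category $\cI$. A pseudonatural transformation $F\circ I_\cB\Rightarrow\cI$ consists of a component functor at the unique object of $\bI$—which is exactly the trivial functor $L^1\colon F(I_\cB)\to\cI$, existing and unique because $\cI$ is terminal—together with one invertible comparison 2-cell for each 1-cell of $\bI$, subject to the coherence axioms expressing compatibility with composition and with identities of 1-cells.

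The crucial point is that $\cI$ is terminal in $\CAT$: for any category there is a unique functor into $\cI$, and between any two parallel functors with codomain $\cI$ there is a unique natural transformation, necessarily the identity. Since $\bI$ has only the identity 1-cell on its single object, the sole comparison 2-cell required has as source and target two functors $F(I_\cB)\to\cI$; by terminality these both coincide with $L^1$, so the comparison 2-cell is forced to be the identity, which is invertible.

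I would then dispose of the coherence axioms: each is an equation between pasting composites of 2-cells whose common codomain is a functor into $\cI$, hence an equation between identity natural transformations, so it holds automatically. I do not expect a genuine obstacle here—the content of the lemma is precisely that the triviality of the target $\cI$ collapses all of the pseudonaturality data—so the only care needed is to write down the degenerate definition correctly and to invoke the two uniqueness properties of the terminal category.
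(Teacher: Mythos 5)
Your proposal is correct and coincides in substance with the paper's treatment: the paper simply declares this lemma trivial, and your argument is exactly the verification of that triviality --- terminality of $\cI$ in $\CAT$ forces the component functor, the sole comparison 2-cell (indexed by the unique identity 1-cell of $\bI$), and all coherence equations to be uniquely determined identities. Nothing further is needed.
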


\begin{proof}
Trivial.
\end{proof}

\begin{Lemma} \label{lemma:oplax_product_1}
Let $A,B$ be objects of $\cB$. The assignment
\[
\begin{aligned}
\xymatrix{X\ar[d]_f\\Y}
\end{aligned}
\quad\mapsto\quad
\left(
\begin{aligned}
\xymatrix{\pi_{1*}(X)\ar[d]_{\pi_{1*}(f)}\\\pi_{1*}(Y)}
\end{aligned}
,\begin{aligned}
\xymatrix{\pi_{2*}(X)\ar[d]^{\pi_{2*}(f)}\\\pi_{2*}(Y)}
\end{aligned}
\right)
\]
defines a functor $L^{A,B}\colon F(A\times B)\to F(A)\times F(B)$.
\end{Lemma}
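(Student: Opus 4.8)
The plan is to recognise that $L^{A,B}$ is nothing but the pairing $\langle F(\pi_1),F(\pi_2)\rangle$ of the two change-of-base functors associated with the projections $\pi_1\colon A\times B\to A$ and $\pi_2\colon A\times B\to B$. First I would recall that, since $F$ is the pseudofunctor associated with the opfibration $P$, applying $F$ to each projection yields honest functors $F(\pi_1)=\pi_{1*}\colon F(A\times B)\to F(A)$ and $F(\pi_2)=\pi_{2*}\colon F(A\times B)\to F(B)$ between the respective fibres; this is part of the very definition of the associated pseudofunctor. The stated assignment sends an object $X$ to $(\pi_{1*}(X),\pi_{2*}(X))$ and an arrow $f$ to $(\pi_{1*}(f),\pi_{2*}(f))$, so it agrees on objects and arrows with the pair $(\pi_{1*},\pi_{2*})$.

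Next I would invoke the (one-dimensional) universal property of the product $F(A)\times F(B)$ in $\CAT$: a pair of functors out of the common domain $F(A\times B)$ factors uniquely through a functor into the product, whose action on objects and arrows is componentwise. This is exactly the functor described by the assignment, so it only remains to confirm that it is indeed functorial, which is inherited from the functoriality of $\pi_{1*}$ and $\pi_{2*}$.

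For the routine verification: identities are preserved because each $\pi_{i*}$ sends $\id_X$ to $\id_{\pi_{i*}(X)}$, so the image pair is $\big(\id_{\pi_{1*}(X)},\id_{\pi_{2*}(X)}\big)=\id_{L^{A,B}(X)}$, the identity in the product taken componentwise. Composition is preserved because, for composable $f,g$ in $F(A\times B)$, each $\pi_{i*}$ preserves composition, whence $L^{A,B}(g\circ f)=\big(\pi_{1*}(g)\circ\pi_{1*}(f),\,\pi_{2*}(g)\circ\pi_{2*}(f)\big)=L^{A,B}(g)\circ L^{A,B}(f)$, with composition in $F(A)\times F(B)$ again componentwise.

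There is essentially no obstacle here: the entire content is that change-of-base functors are functors and that the product in $\CAT$ enjoys its universal property. The only point requiring minor care is to work with the specific functors $\pi_{1*}$ and $\pi_{2*}$ determined by the chosen cleavage, so that the assignment is well-defined on the nose rather than merely up to the coherence isomorphisms $\phi$, $\phi^1$ of $F$.
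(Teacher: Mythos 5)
Your proof is correct and follows exactly the paper's approach: both identify $L^{A,B}$ as the pairing $\langle F(\pi_1),F(\pi_2)\rangle=\langle\pi_{1*},\pi_{2*}\rangle$ obtained from the one-dimensional universal property of the product $F(A)\times F(B)$ in $\CAT$. The componentwise functoriality check you spell out is implicit in the paper's argument.
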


\begin{proof}
The functor $L^{A,B}$ is in fact defined by the (1-dimensional) universal property of products in $\CAT$, as the following diagram illustrates:
\[
\xymatrix{
&F(A\times B)\ar[dl]_{F(\pi_1)}\ar@{-->}[d]^{L^{A,B}}\ar[dr]^{F(\pi_1)}
\\
F(A)&F(A)\times F(B)\ar[l]^-{\pi_1}\ar[r]_-{\pi_2}
&F(B)}
\]
In the language of opfibrations, $F(\pi_i)=\pi_{i*}$ and  $L^{A,B}=\langle \pi_{1*},\pi_{2*} \rangle$.
\end{proof}

\begin{Lemma} \label{lemma:oplax_product_2}
The functor $L^{A,B}$ defines a pseudonatural transformation $L$ as in the diagram on the left of \eqref{diag:oplax_monoidal}.
\end{Lemma}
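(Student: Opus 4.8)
The plan is to realize $L$ as the pairing, under the (2-dimensional) universal property of the product in $\CAT$, of two pseudonatural transformations obtained for free by whiskering $F$ against the product projections. Write $p_1,p_2\colon\cB\times\cB\to\cB$ for the projection functors. In the left square of \eqref{diag:oplax_monoidal} the top-right composite $F\circ{\times}$ has value $F(A\times B)$ at $(A,B)$, while the bottom-left composite ${\times}\circ(F\times F)$ has value $F(A)\times F(B)$; note that its postcompositions with the two projections $\pi_1,\pi_2$ of $\CAT$ are exactly $F\circ p_1$ and $F\circ p_2$.

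First I would observe that the projections $\pi_1^{A,B}\colon A\times B\to A$ and $\pi_2^{A,B}\colon A\times B\to B$ assemble into strict natural transformations $\pi_1\colon{\times}\Rightarrow p_1$ and $\pi_2\colon{\times}\Rightarrow p_2$ of functors $\cB\times\cB\to\cB$: for a morphism $(f,g)$ the naturality squares read $\pi_1^{A',B'}\circ(f\times g)=f\circ\pi_1^{A,B}$ and $\pi_2^{A',B'}\circ(f\times g)=g\circ\pi_2^{A,B}$, which hold strictly by the definition of $f\times g$, and there are no nontrivial 2-cells to check since $\cB$ is locally discrete. Whiskering with the pseudofunctor $F$ then yields pseudonatural transformations $F\pi_1\colon F\circ{\times}\Rightarrow F\circ p_1$ and $F\pi_2\colon F\circ{\times}\Rightarrow F\circ p_2$, with components $F(\pi_1^{A,B})=\pi_{1*}$ and $F(\pi_2^{A,B})=\pi_{2*}$ and with structure 2-cells supplied by the compositors $\phi$ of $F$.

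Since the product in $\CAT$ is a genuine 2-product (the 2-dimensional universal property recalled in Section \ref{sec:preliminaries}), a pseudonatural transformation with target ${\times}\circ(F\times F)$ is the same datum as a pair of pseudonatural transformations with targets $F\circ p_1$ and $F\circ p_2$. Hence the pair $(F\pi_1,F\pi_2)$ pairs up into a single pseudonatural transformation
\[
L=\langle F\pi_1,\,F\pi_2\rangle\colon F\circ{\times}\Longrightarrow {\times}\circ(F\times F),
\]
whose component at $(A,B)$ is $\langle F(\pi_1),F(\pi_2)\rangle=L^{A,B}$, matching Lemma \ref{lemma:oplax_product_1}. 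Explicitly, for a morphism $(f,g)$ the naturality 2-cell $L_{(f,g)}$ is the unique natural isomorphism whose two projections are the compositor isomorphisms witnessing $F(\pi_1)\circ F(f\times g)\cong F(f)\circ F(\pi_1)$ and $F(\pi_2)\circ F(f\times g)\cong F(g)\circ F(\pi_2)$, built from $\phi$ via the strict equalities above.

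The one point that needs care is the claim that pseudonaturality data and axioms for a transformation valued in a 2-product decompose componentwise along the two projections; granting this, the two coherence axioms for $L$ (compatibility with composition and with identities of 1-cells) reduce to the corresponding axioms for $F\pi_1$ and $F\pi_2$ separately, and these in turn follow directly from the associativity and unit coherence of the compositors $\phi,\phi^1$ of $F$. I do not expect a genuine obstacle here: everything is forced by the universal property of products, and the componentwise reduction turns the verification into pseudofunctor axioms already available for $F$.
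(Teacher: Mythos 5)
Your proof is correct and takes essentially the same route as the paper: both obtain the pseudonaturality 2-cell $L^{(a,b)}$ as the unique natural isomorphism whose two projections are the compositor-induced isomorphisms $F(\pi_1)\circ F(a\times b)\cong F(a)\circ F(\pi_1)$ and $F(\pi_2)\circ F(a\times b)\cong F(b)\circ F(\pi_2)$, coming from the strict equalities $\pi_i\circ(a\times b)=a\circ\pi_i$ (resp.\ $b\circ\pi_2$) in $\cB$ together with the 2-dimensional universal property of products in $\CAT$. Your packaging of this data as whiskered pseudonatural transformations $F\pi_1,F\pi_2$ paired via pointwise products is just a more global phrasing of the paper's componentwise construction, with the same ingredients doing the same work.
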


\begin{proof}
Pseudonaturality is given by the 2-dimensional property of products in $\CAT$. More precisely, for arrows $a\colon A\to A'$ and $b\colon B\to B'$, by composing the (not necessarily commutative) square
\begin{equation}\label{diag:L^(a,b)}
\begin{aligned}
\xymatrix@C=16ex{
F(A\times B)\ar[d]_{L^{A,B}}\ar[r]^{F(a\times b)}
&F(A'\times B')\ar[d]^{L^{A'\!,B'}}
\ar@{}[dl]|(.4){}="1"|(.6){}="2"
\ar@{==>}"1";"2"^(.3){\ \ L^{(a,b)}}_{\cong}
\\
F(A)\times F(B)\ar[r]_{F(a)\times F(b)}
&F(A')\times F(B')}
\end{aligned}
\end{equation}
with product projections, one obtains the squares
\[
\xymatrix@C=10ex{
F(A\times B)\ar[d]_{F(\pi_1)}\ar[r]^-{F(a\times b)}
&F(A'\times B')\ar[d]^{F(\pi_1)}
\ar@{}[dl]|(.4){}="1"|(.6){}="2"
\ar@{=>}"1";"2"^(.3){\ \ \iota_1}_{\cong}
\\
F(A)\ar[r]_-{F(a)}
&F(A')}
\xymatrix@C=10ex{
F(A\times B)\ar[d]_{F(\pi_2)}\ar[r]^-{F(a\times b)}
&F(A'\times B')\ar[d]^{F(\pi_2)}
\ar@{}[dl]|(.4){}="1"|(.6){}="2"
\ar@{=>}"1";"2"^(.3){\ \ \iota_2}_{\cong}
\\
F(B)\ar[r]_-{F(b)}
&F(B')}
\]
Due to the pseudofunctoriality of $F$, these are filled with natural isomorphisms $\iota_1$ and $\iota_2$. The universal property produces the natural isomorphism $L^{(a,b)}$ which fills diagram \eqref{diag:L^(a,b)}, such that $\pi_i\circ L^{(a,b)}= \iota_i$.
\end{proof}

It is useful to spell out the definition of $L^{(a,b)}$ on the components. For an object $Z$ of $F(A\times B)$, if we write $L^{(a,b)}_Z=(L^{(a,b)}_{Z,1},L^{(a,b)}_{Z,2})$, then the isomorphism $L^{(a,b)}_{Z,1}\colon a_*\pi_{1*}(Z)\to \pi_{1*}(a\times b)_*(Z)$ is the vertical comparison between two different cocartesian liftings at $Z$ of  $a\circ \pi_1=\pi_1\circ (a\times b)\colon A\times B\to A'$.
Similarly for the isomorphism $L^{(a,b)}_{Z,2}\colon b_*\pi_{2*}(Z)\to \pi_{2*}(a\times b)_*(Z)$.

\begin{Proposition} \label{prop:oplax_for_free}
Let $\cB$ be a category with finite products, and $F\colon \cB\to \CAT$ be a  pseudofunctor. Then $F$ is canonically endowed with an oplax symmetric monoidal structure
\[
(F,L,L^1,\omega,\zeta,\xi,\theta)\colon (\cB,\times,I_{\cB})\longrightarrow (\CAT,\times,\cI)
\]
\end{Proposition}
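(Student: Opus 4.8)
The plan is to exploit the fact, stressed in the Preliminaries, that finite products in $\CAT$ are genuine $2$-limits, so that the $1$- and $2$-dimensional universal property of products is available throughout. The data $L$ and $L^1$ are already in hand: by Lemma \ref{lemma:oplax_unit} the trivial functor $L^1$ is a pseudonatural transformation, and by Lemmas \ref{lemma:oplax_product_1} and \ref{lemma:oplax_product_2} the functors $L^{A,B}=\langle\pi_{1*},\pi_{2*}\rangle$ assemble into a pseudonatural transformation $L$. What remains is to produce the invertible modifications $\omega$, $\zeta$, $\xi$, $\theta$ filling diagrams \eqref{diag:oplax_monoidal_mod_1}--\eqref{diag:oplax_monoidal_mod_4}, and to check the coherence axioms of an oplax symmetric monoidal pseudofunctor.

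First I would construct the modifications. In each defining square both legs are functors whose codomain is a (possibly iterated) product of fibres of $F$; for instance in \eqref{diag:oplax_monoidal_mod_1} both legs are functors $F((A\times B)\times C)\to F(A)\times(F(B)\times F(C))$. By the $2$-dimensional universal property, a natural isomorphism between them is uniquely determined by its projections onto the factors. Computing each projection and using that $L^{A,B}=\langle\pi_{1*},\pi_{2*}\rangle$ satisfies $\pi_i\circ L^{A,B}=F(\pi_i)$ strictly, one finds that each projected leg is $F$ applied to a composite of structural maps of $\cB$ (projections and the associator). Since the two resulting composites are \emph{equal} morphisms in $\cB$ (both being, e.g., the canonical projection $(A\times B)\times C\to A$, by the universal property in $\cB$), the pseudofunctoriality isomorphisms $\phi$ of $F$ supply a canonical invertible comparison, and the comparisons on the several factors assemble, via the universal property again, into $\omega_{A,B,C}$. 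The same recipe produces $\theta_{A,B}$, now starting from the equality $\pi_2\circ\sigma_{A,B}=\pi_1$ in $\cB$ (and symmetrically), while $\zeta_A$ and $\xi_A$ turn out to be identities once one observes that $\pi_1\circ(1\times L^1)\circ L^{A,I_\cB}=F(\pi_1)$ on the nose. That these are genuine modifications, compatible with the pseudonaturality $2$-cells $L^{(a,b)}$ of $L$, follows once more by projecting to factors and invoking the naturality of $\phi$ together with its explicit description, given after Lemma \ref{lemma:oplax_product_2}, as the vertical comparison of cocartesian liftings.

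Finally I would verify the coherence conditions. Here the key move is again that every coherence axiom is an equality of pasted $2$-cells whose common target is a product of fibres, so by the $2$-dimensional universal property it suffices to check it after projecting onto each factor. After projection, every $2$-cell in sight is built solely from the coherence isomorphisms $\phi$ and $\phi^1$ of $F$, and each axiom collapses to an instance of the associativity and unit coherence that $F$ already satisfies as a pseudofunctor, together with the strict coherence of the cartesian structure of $\CAT$ (which holds by Mac Lane's theorem). The symmetry axioms for $\theta$ reduce in the same way, using that the braidings of both $\cB$ and $\CAT$ are induced by the universal property of products.

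I expect no genuinely deep difficulty: the content is essentially that a cartesian target forces the oplax structure ``for free.'' The main obstacle is bookkeeping — for each modification and each coherence axiom one must track precisely which instance of $\phi$ or $\phi^1$ is produced after projecting to a factor, and confirm that the underlying pairs of morphisms in $\cB$ really are equal, so that $\phi$ alone (and not some more elaborate datum) provides the comparison. Keeping the reductions to factors systematic is exactly what makes the verification routine rather than error-prone.
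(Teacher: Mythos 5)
Your proposal is correct and follows essentially the same route as the paper: the paper's proof likewise cites Lemmas \ref{lemma:oplax_unit}, \ref{lemma:oplax_product_1} and \ref{lemma:oplax_product_2} for $L^1$ and $L$, and then obtains the modifications $\omega,\zeta,\xi,\theta$ and the coherence conditions by "a careful application of the 2-dimensional property of products in $\CAT$", which is exactly the projection-to-factors argument you spell out. Your write-up in fact supplies the bookkeeping (reduction to equal composites in $\cB$ compared via the pseudofunctoriality isomorphisms $\phi$, $\phi^1$) that the paper dismisses as tedious but straightforward.
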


\begin{proof}
Oplax preservation of unit object is given by Lemma~\ref{lemma:oplax_unit}, while
oplax preservation of products by Lemma~\ref{lemma:oplax_product_1} and by Lemma~\ref{lemma:oplax_product_2}.
It is tedious, but essentially straightforward, to obtain the invertible modifications $\omega,\zeta,\xi$ and $\theta$ of diagrams \eqref{diag:oplax_monoidal_mod_1}, \eqref{diag:oplax_monoidal_mod_2}, \eqref{diag:oplax_monoidal_mod_3} and \eqref{diag:oplax_monoidal_mod_4}: a careful application of the 2-dimensional property of products in $\CAT$ will do the job. In the same way, one proves that coherence conditions are satisfied.
\end{proof}

\section{Adjoints make products}

\begin{Lemma} \label{lemma:R1}
Let $\cB$ be a category with terminal object $I_\cB$ and $P\colon \cX\to\cB$ be an opfibration, and $F\colon \cB\to\CAT$ its associated pseudofunctor. Then the following statements are equivalent.
\begin{itemize}
\item[i.] The category $\cX$ has a terminal object $I_\cX$, and $P$ strictly preserves it.
\item[ii.] The functor $L^1$ defined in Lemma~\ref{lemma:oplax_unit} has a right adjoint $R^1$.
\end{itemize}
\end{Lemma}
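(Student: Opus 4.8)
The plan is to first reduce condition (ii) to a concrete statement about the fibre $F(I_\cB)$, and then to connect that statement to condition (i) by means of the cocartesian liftings of terminal maps.

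First I would record the elementary observation that the unique functor $L^1\colon F(I_\cB)\to\cI$ admits a right adjoint precisely when $F(I_\cB)$ has a terminal object. Indeed, a functor $R^1\colon\cI\to F(I_\cB)$ is nothing but a choice of object $T=R^1(\star)$, and the adjunction $L^1\dashv R^1$ amounts to natural bijections $\mathrm{Hom}_\cI(L^1 X,\star)\cong\mathrm{Hom}_{F(I_\cB)}(X,T)$; since the left-hand side is always a singleton, this says exactly that $T$ is terminal in $F(I_\cB)$. Thus (ii) is equivalent to: the fibre $F(I_\cB)$ has a terminal object. The rest of the argument will prove that this last condition is equivalent to (i).

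For (i) $\Rightarrow$ (ii), I would take the terminal object $I_\cX$ of $\cX$ and check that, under the hypothesis $P(I_\cX)=I_\cB$, it already serves as a terminal object of the fibre. The key point is that for any $X$ in $F(I_\cB)$ the unique arrow $X\to I_\cX$ in $\cX$ projects to an endomorphism of $I_\cB$, which must be $\mathrm{id}_{I_\cB}$ since $I_\cB$ is terminal in $\cB$; hence that arrow is vertical, and it is the unique vertical arrow $X\to I_\cX$ because it is already unique in $\cX$.

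The substantive direction is (ii) $\Rightarrow$ (i). Here I would let $T$ be a terminal object of $F(I_\cB)$ and show it is in fact terminal in all of $\cX$, with $P(T)=I_\cB$. Given any $X\in\cX$ over $A=P(X)$, the terminal map $\tau_A\colon A\to I_\cB$ has a cocartesian lifting $\hat\tau_A\colon X\to(\tau_A)_*(X)$, with $(\tau_A)_*(X)$ in the fibre over $I_\cB$. I would first note that any arrow $X\to T$ in $\cX$ automatically projects to $\tau_A$, since $I_\cB$ is terminal in $\cB$; this is what makes the cocartesian universal property applicable. Applying that universal property with target $T$ and base component $\mathrm{id}_{I_\cB}$ (using $\mathrm{id}_{I_\cB}\circ\tau_A=\tau_A$) then yields a bijection between arrows $X\to T$ in $\cX$ and vertical arrows $(\tau_A)_*(X)\to T$ in $F(I_\cB)$. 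Terminality of $T$ in the fibre gives a unique such vertical arrow, hence a unique arrow $X\to T$ in $\cX$, so $T$ is terminal in $\cX$, and $P(T)=I_\cB$ gives strict preservation.

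The only real obstacle is the bookkeeping in the (ii) $\Rightarrow$ (i) step: one must invoke the universal property of $\hat\tau_A$ with the correct factorization data and verify that the resulting correspondence is a genuine bijection (surjectivity from the existence part, injectivity from the uniqueness part of the cocartesian property). Once this translation between arrows out of $X$ and vertical arrows out of $(\tau_A)_*(X)$ is in place, terminality in the fibre transfers to terminality in $\cX$ immediately, and the equivalence follows.
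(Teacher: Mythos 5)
Your proof is correct and takes essentially the same route as the paper's: both directions rest on identifying a right adjoint $R^1$ with a terminal object of the fibre $F(I_\cB)$, and the implication $(ii.\Rightarrow i.)$ is proved exactly as in the paper, by composing a cocartesian lifting of $\tau_{P(X)}$ with the terminal map of the fibre and invoking the universal property of cocartesian arrows. The only difference is expository: you spell out the verticality bookkeeping (arrows into an object over $I_\cB$ necessarily lie over terminal maps) that the paper leaves implicit.
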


\begin{proof}
$(i.\Rightarrow ii.)$ Since $P(I_\cX)=I_\cB$ by hypothesis, we can define $R^1$ by $R^1(\star)=I_\cX$. $R^1$ is trivially right adjoint to $L^1$: the unique component of the counit is $\epsilon^1_\star=1_\star\colon L^1R^1(\star)=\star\to\star$. And, for $X\in F(I_\cB)$, $\eta^1_X\colon X\to R^1L^1(X)=I_\cX$ is the terminal map $\tau_X$.

$(ii.\Rightarrow i.)$ Since $R^1$ is a right adjoint, $R^1(\star)$ is terminal in $F(I_\cB)$.  We claim it is also  terminal in $\cX$.
Indeed, let $X$ be any object of $\cX$. We can lift the terminal map $\tau_{P(X)}\colon P(X)\to I_\cB$ at $X$, and then compose this lifting $\hat\tau_{P(X)}$ with the terminal map $t\colon \tau_{P(X)*}(X)\to I_\cX=R^1(\star)$ in $F(I_\cB)$. By the property of cocartesian liftings, this is the unique arrow $X\to I_\cX$.
\end{proof}

\begin{Lemma} \label{lemma:R}
Let $\cB$ be a category with binary products and $P\colon \cX\to\cB$ be  an opfibration, and $F\colon \cB\to\CAT$ its associated pseudofunctor.  For $A$ and $B$ objects of $\cB$, the following statements are equivalent.
\begin{itemize}
\item[i.]  For $X$ and $Y$ in $\cX$ such that $P(X)=A$ and $P(Y)=B$, there exists a product $X\times Y$ in $\cX$ such that $P(X\times Y)=A\times B$.
\item[ii.] The functor $L^{A,B}$ defined in Lemma~\ref{lemma:oplax_product_1} has a right adjoint $R^{A,B}$.
\end{itemize}
If this is the case, we have $X\times Y=R^{A,B}(X,Y)$.
\end{Lemma}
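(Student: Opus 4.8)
The plan is to prove the two implications separately, in each case exhibiting the product $X\times Y$ and the value $R^{A,B}(X,Y)$ as the \emph{same} object, exactly as in the proof of Lemma~\ref{lemma:R1}. The whole argument rests on a single translation principle: by the universal property of cocartesian liftings, for $Z$ in the fibre $F(A\times B)$ the vertical maps $\pi_{1*}(Z)\to X$ in $F(A)$ are in natural bijection with the maps $Z\to X$ in $\cX$ lying over $\pi_1$, and likewise for $\pi_{2*}(Z)\to Y$ and maps over $\pi_2$. Hence a morphism of $L^{A,B}(Z)=(\pi_{1*}Z,\pi_{2*}Z)$ into $(X,Y)$ in $F(A)\times F(B)$ is the same datum as a pair $(f,g)$ with $f\colon Z\to X$ over $\pi_1$ and $g\colon Z\to Y$ over $\pi_2$.

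For $(i.\Rightarrow ii.)$ I would set $R^{A,B}(X,Y):=X\times Y$ and check the adjunction $L^{A,B}\dashv R^{A,B}$ by producing, naturally in $Z\in F(A\times B)$, a bijection between $F(A\times B)(Z,X\times Y)$ and the pairs $(f,g)$ described above. The universal property of the product in $\cX$ already identifies \emph{arbitrary} maps $Z\to X\times Y$ with \emph{arbitrary} pairs $(f,g)$ via $h\mapsto(p_1h,p_2h)$, so the only point is that this bijection restricts correctly: a map $h$ is vertical (i.e.\ $P(h)=1_{A\times B}$) precisely when $f=p_1h$ lies over $\pi_1$ and $g=p_2h$ lies over $\pi_2$. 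This is where the hypothesis $P(X\times Y)=A\times B$ is used in its strict form, namely that $P$ sends the projections $p_1,p_2$ to $\pi_1,\pi_2$ (the analogue of ``$P$ strictly preserves'' in Lemma~\ref{lemma:R1}); granting this, $\pi_1P(h)=\pi_1$ and $\pi_2P(h)=\pi_2$ force $P(h)=1_{A\times B}$ by the universal property of $A\times B$, and conversely.

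For $(ii.\Rightarrow i.)$ I would, dually to Lemma~\ref{lemma:R1}, put $W:=R^{A,B}(X,Y)$ and read off the projections from the counit $\epsilon=(\epsilon_1,\epsilon_2)\colon(\pi_{1*}W,\pi_{2*}W)\to(X,Y)$, setting $p_1:=\epsilon_1\circ\widehat{\pi_1}$ and $p_2:=\epsilon_2\circ\widehat{\pi_2}$; these lie over $\pi_1,\pi_2$ by construction, and $P(W)=A\times B$. To prove that $(W,p_1,p_2)$ is a genuine product in $\cX$ I would test it against an \emph{arbitrary} object $Z'$ of $\cX$ (not merely one in the fibre): writing $C=P(Z')$ and $c=\langle P(u),P(v)\rangle\colon C\to A\times B$ for given $u\colon Z'\to X$, $v\colon Z'\to Y$, I would cocartesianly lift $c$ at $Z'$ to $\widehat c\colon Z'\to c_*(Z')$ and then string together four bijections: the cocartesian property of $\widehat c$ (maps $Z'\to W$ over $c$ versus vertical maps $c_*(Z')\to W$), the adjunction $L^{A,B}\dashv R^{A,B}$ at $Z=c_*(Z')$, the cocartesian property of $\widehat{\pi_1},\widehat{\pi_2}$ (vertical maps $\pi_{i*}c_*(Z')\to X,Y$ versus maps $c_*(Z')\to X,Y$ over $\pi_i$), and finally the cocartesian property of $\widehat c$ again (maps $c_*(Z')\to X$ over $\pi_1$ versus maps $Z'\to X$ over $\pi_1c=P(u)$, and similarly for $Y$). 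Since any $h$ with $p_1h=u$ must have $P(h)=c$, the constraint ``over $c$'' is automatic, and the composite bijection sends $h$ to $(p_1h,p_2h)$, witnessing the required universal property.

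I expect the main obstacle to be precisely this last verification in $(ii.\Rightarrow i.)$: one must check that the composite of the four bijections really is $h\mapsto(p_1h,p_2h)$ and is natural, which amounts to using carefully that cocartesian liftings compose (so that the lift of $\pi_1c$ at $Z'$ agrees with the lift of $\pi_1$ at $c_*(Z')$ precomposed with $\widehat c$) together with the triangle identities for $\epsilon$. The subtlety flagged in $(i.\Rightarrow ii.)$ --- that $P$ carries the product projections to $\pi_1$ and $\pi_2$ --- is the other point that must not be glossed over; in the $(ii.\Rightarrow i.)$ direction it holds by construction, and the final identification $X\times Y=R^{A,B}(X,Y)$ is then immediate from both directions.
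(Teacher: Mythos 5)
Your proposal is correct and follows essentially the same route as the paper: setting $R^{A,B}(X,Y):=X\times Y$ in one direction, and in the other equipping $R^{A,B}(X,Y)$ with the projections $\epsilon_i\circ\hat\pi_i$ and reducing to the fibre over $A\times B$ via a cocartesian lifting of $\langle P(u),P(v)\rangle$ --- your chain of four bijections is precisely the paper's explicit construction-and-uniqueness argument for the counit, repackaged in hom-set form. Your remark that condition $i.$ must be read as $P$ preserving the whole product cone (projections lying over $\pi_1,\pi_2$), not merely the object, is a fair reading of a point the paper leaves implicit (its diagram \eqref{diag:counit} silently places the product projections of $X\times Y$ over $\pi_1,\pi_2$), and your use of it is exactly how the paper uses it.
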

\begin{proof} $(i.\Rightarrow ii.)$ Let $X,Y$ be objects of $\cX$, and $A=P(X)$ and $B=P(Y)$. We can define $R^{A,B}(X,Y)=X\times Y$, and this easily extends to a functor
\[
R^{A,B}\colon F(A)\times F(B) \longrightarrow F(A\times B)
\]
by the universal property of products in $\cX$. In order to prove that $R^{A,B}$ is right adjoint to $L^{A,B}$, let us define the counit $\epsilon^{A,B}\colon L^{A,B}R^{A,B}\Rightarrow 1_{F(A)\times F(B)}$  and show that it satisfies the expected universal property.

The counit is obtained with the help of the universal property of cocartesian liftings of product projections in $\cB$. For $X, Y$ in $\cX$, the component
\[
\epsilon^{A,B}_{(X,Y)}=(\epsilon_1,\epsilon_2)\colon L^{A,B}R^{A,B}(X,Y)=(\pi_{1*}(X\times Y),\pi_{2*}(X\times Y))\longrightarrow (X,Y)
\]
of the counit is defined as follows: $\epsilon_i$ ($i=1,2$) are the unique arrows such that $\epsilon_i\circ \hat\pi_i=\pi_i$ and $P(\epsilon_i)$ are identities -- the reader may find it helpful to look at the lower part of diagram \eqref{diag:counit}. Moreover, $(\epsilon_1,\epsilon_2)$ satisfies the universal property of the counit.
Actually, given the arrow
\[
(g_1,g_2)\colon L^{A,B}(Z)=(\pi_{1*}(Z),\pi_{2*}(Z))\longrightarrow(X,Y)
\]
in $F(A)\times F(B)$, there exists a unique $h$ such that $\pi_i\circ h=g_i\circ \hat\pi_i$ (see  diagram \eqref{diag:counit} below, where the dotted lines help to identify the fibres):
\begin{equation}\label{diag:counit}
\begin{aligned}
\xymatrix@C=10ex{
\pi_{1*}(Z)\ar@{-->}[d]^{\pi_{1*}(h)}\ar@/_8ex/[dd]_{g_1}
&\ar[l]_-{\hat\pi_1}Z\ar[r]^-{\hat\pi_2}\ar@{-->}[d]^{h}
&\pi_{2*}(Z)\ar@{-->}[d]_{\pi_{2*}(h)}\ar@/^8ex/[dd]^{g_2}
\\
\pi_{1*}(X\times Y)\ar[d]_{\epsilon_1}
&\ar[l]_-{\hat\pi_1}X\times Y\ar@{.}[dd]\ar[r]^-{\hat\pi_2}\ar[dl]^{\pi_1}\ar[dr]_{\pi_2}
&\pi_{2*}(X\times Y)\ar[d]^{\epsilon_2}
\\
X\ar@{.}[d]&&Y\ar@{.}[d]
\\
A&\ar[l]^{\pi_1}A\times B\ar[r]_{\pi_2}&B
}
\end{aligned}
\end{equation}
Then, we can apply $L^{A,B}$ to $h$, and obtain the pair $(\pi_{1*}(h),\pi_{2*}(h))$. Finally,  $g_i=\epsilon_i\circ \pi_{i*}(h)$, since they are $P$-vertical, and precomposing with the cocartesian arrow $\hat\pi_i$ gives the same result.

Notice that the corresponding unit component in $Z$ is given by
\[
\eta^{A,B}_{Z}=\langle \hat\pi_1,\hat\pi_2  \rangle\colon Z\longrightarrow \pi_{1*}(Z)\times\pi_{2*}(Z)\,,
\]
where $\hat\pi_i$ is the cocartesian lifting at $Z$ of the product projection $\pi_i$.

$(ii.\Rightarrow i.)$ Let  $X,Y$ be objects of $\cX$, and let $A=P(X)$ and $B=P(Y)$. We want to prove that  $R^{A,B}(X,Y)$  gives rise to a product $X\times Y$ in $\cX$, with   projections  $\pi_i=\epsilon_i\circ \hat\pi_i$, where $\epsilon^{A,B}_{(X,Y)}=(\epsilon_1,\epsilon_2)$ is the counit of the adjunction $L^{A,B}\dashv  R^{A,B}$, and $\hat\pi_i$ is the cocartesian lifting of the product projection $\pi_i$. In order to prove that we actually define a product, let us consider two arrows $g_1\colon Z\to X$ and $g_2\colon Z\to Y$ in $\cX$ and their images  $P(g_1)=f_1\colon C\to A$ and $P(g_2)=f_2\colon C\to B$ in $\cB$. By taking the cocartesian lifting of $\langle f_1,f_2\rangle\colon C\to A\times B$ at $Z$, we obtain that $g_i= g'_i\circ \widehat{\langle f_1,f_2\rangle}$, where $g'_i$ is the unique factor such that $P(g'_i)=\pi_i$:
\[
\xymatrix{
\pi_{1 *}(X\times Y)\ar@/_4ex/[dd]_{\epsilon_1}
&&X\times Y\ar[ll]_-{\hat\pi_1}
\ar[rr]^-{\hat\pi_2}
&&\pi_{2 *}(X\times Y)\ar@/^4ex/[dd]^{\epsilon_2}
\\
W_1\ar[d]^{k_1}\ar@{-->}[u]_{\pi_{1*}(k)}
&&\langle f_1,f_2\rangle_*(Z)\ar@{-->}[ldl]_{g'_1}\ar@{-->}[rrd]^{g'_2}\ar@{.}[ddd]\ar@{-->}[u]_{k}
\ar[ll]_{\hat\pi_1}\ar[rr]^{\hat\pi_2}
&&W_2\ar[d]_{k_2}\ar@{-->}[u]^{\pi_{2*}(k)}
\\
X\ar@{.}[dd]
&&
&&Y\ar@{.}[dd]
\\
&&&Z\ar[ulll]^{g_1} \ar[uul]_(.35){\widehat{\langle f_1,f_2\rangle}} \ar@{.}[dd] \ar[ur]_{g_2}
\\
A
&&A\times B\ar[ll]_{\pi_1} \ar[rr]^(.65){\pi_2}
&&B
\\
&&&C\ar[ulll]^{f_1} \ar[ul]_{\langle f_1,f_2\rangle}  \ar[ur]_{f_2}
}
\]
Then there exists a unique $k_i$ such that $P(k_i)=1$ and $k_i\circ \hat\pi_i=g'_i$, where $\hat\pi_i$ is the cocartesian lifting at $\langle f_1,f_2\rangle_*(Z)$  of $\pi_i$.
So we get a morphism $(k_1,k_2)\colon (W_1,W_2)=L^{A,B}(\langle f_1,f_2\rangle_*(Z))\to(X,Y)$. Then, by the universal property of counits, there exists a unique $k\colon\langle f_1,f_2\rangle_*(Z)\to X\times Y$ such that $\epsilon_i\circ \pi_{i*}(k)=k_i$.
By precomposing, we eventually find the unique arrow $k \circ \widehat{\langle f_1,f_2\rangle}\colon Z\to X\times Y$ such that
\[
\epsilon_i\circ \hat \pi_i \circ k \circ \langle f_1,f_2\rangle
=\epsilon_i\circ \pi_{i*}(k) \circ  \hat \pi_i \circ \langle f_1,f_2\rangle
=k_i\circ  \hat \pi_i \circ \langle f_1,f_2\rangle
=g'_i\circ \langle f_1,f_2\rangle=g_i.
\]
\end{proof}

\begin{Corollary} \label{corollary:R}
Let $\cB$ be a category with binary products and $P\colon \cX\to\cB$ be  an opfibration, and $F\colon \cB\to\CAT$ its associated pseudofunctor. Then the following statements are equivalent.
\begin{itemize}
\item[i.]  The category $\cX$ has binary products, and $P$ strictly preserves them.
\item[ii.] For every pair of objects $A$ and $B$ in $\cB$, the functor $L^{A,B}$ defined in Lemma~\ref{lemma:oplax_product_1} has a right adjoint $R^{A,B}$.
\end{itemize}
If this is the case, we have $X\times Y=R^{P(X),P(Y)}(X,Y)$.
\end{Corollary}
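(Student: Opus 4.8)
The plan is to deduce the Corollary directly from Lemma~\ref{lemma:R}, of which it is the ``global'' version: Lemma~\ref{lemma:R} fixes a single pair $A,B$ in $\cB$ and speaks only of products of objects lying over $A$ and $B$, whereas the Corollary asserts the existence of \emph{all} binary products in $\cX$ together with their strict preservation by $P$. The bridge between the two is simply that any two objects $X,Y$ of $\cX$ automatically lie over $A:=P(X)$ and $B:=P(Y)$, so one can run Lemma~\ref{lemma:R} for the pair $(P(X),P(Y))$.

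For the implication $(i.\Rightarrow ii.)$ I would fix a pair $A,B$ and take any $X,Y$ with $P(X)=A$ and $P(Y)=B$. Since $\cX$ has binary products, $X\times Y$ exists, and strict preservation of products by $P$ forces $P(X\times Y)=P(X)\times P(Y)=A\times B$. This is precisely condition (i.) of Lemma~\ref{lemma:R} for the pair $A,B$, so that lemma yields a right adjoint $R^{A,B}$ to $L^{A,B}$. As $A,B$ were arbitrary, condition (ii.) of the Corollary follows.

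For $(ii.\Rightarrow i.)$ I would take arbitrary $X,Y\in\cX$ and set $A=P(X)$, $B=P(Y)$. The hypothesis provides a right adjoint $R^{A,B}$, so the implication $(ii.\Rightarrow i.)$ of Lemma~\ref{lemma:R} produces a product $X\times Y$ in $\cX$ with $P(X\times Y)=A\times B$ and $X\times Y=R^{A,B}(X,Y)=R^{P(X),P(Y)}(X,Y)$, which simultaneously establishes the final identity. Since $X,Y$ were arbitrary, $\cX$ has all binary products.

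The only point requiring care --- and the step I would single out as the main (if minor) obstacle --- is the word \emph{strict} in condition (i.). Here I would invoke the explicit description of the projections from the proof of Lemma~\ref{lemma:R}: they are $\pi_i=\epsilon_i\circ\hat\pi_i$, where $\hat\pi_i$ is the cocartesian lifting of the projection $\pi_i\colon A\times B\to A$ (resp.\ $\to B$), so that $P(\hat\pi_i)=\pi_i$, while $\epsilon_i$ is $P$-vertical, so that $P(\epsilon_i)=1$. Hence $P(\pi_i)=P(\epsilon_i)\circ P(\hat\pi_i)=\pi_i$, which says exactly that $P$ carries the projections of $X\times Y$ to the projections of $A\times B$; combined with $P(X\times Y)=A\times B$, this is precisely strict preservation of binary products.
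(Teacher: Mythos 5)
Your proof is correct and takes essentially the same route as the paper, whose entire proof reads ``Just apply Lemma~\ref{lemma:R} for all $A,B$ in $\cB$'' --- precisely your pointwise reduction, using strict preservation to supply condition (i.) of the Lemma in one direction and the Lemma's construction to supply all products in the other. Your extra verification that the constructed projections $\pi_i=\epsilon_i\circ\hat\pi_i$ satisfy $P(\pi_i)=\pi_i$ (so that preservation is genuinely \emph{strict}, not merely $P(X\times Y)=A\times B$ on objects) is a detail the paper leaves implicit, and you handle it correctly.
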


\begin{proof}
Just apply Lemma~\ref{lemma:R} for all $A,B$ in $\cB$.
\end{proof}

\begin{Remark}
Put together, Lemma~\ref{lemma:R1} and Corollary~\ref{corollary:R} say that requiring \ref{lemma:R1}.\emph{i}.\ plus \ref{corollary:R}.\emph{i}.\ to hold is the same as requiring $\cX$ to have finite products, and $P$ to be a strict monoidal functor with respect to the cartesian structures.
\end{Remark}

For a pair $(a,b)$ of arrows in $\cB$, let us consider the natural isomorphism $L^{(a,b)}$  described in diagram~\eqref{diag:L^(a,b)}. If $L^{A,B}$ has a right adjoint $R^{A,B}$ and $L^{A'\!,B'}$ has right adjoint $R^{A'\!,B'}$, then the pasting
\begin{equation}\label{diag:mate}
\begin{aligned}
\xymatrix@C=3.2ex{
F(A)\tm F( B)\ar[r]^-{R^{A,B}}\ar[ddr]_1
&F(A\tm B)\ar[dd]^{L^{A,B}}\ar[rr]^{F(a\times b)}
\ar@{}[ddl]|(.25){}="1"|(.4){}="2"
\ar@{=>}"1";"2"_(.3){\epsilon^{A,B}}
&&F(A'\tm B')\ar[dd]_{L^{A'\!,B'}\!\!}
\ar@{}[ddll]|(.3){}="1"|(.7){}="2"
\ar@{=>}"1";"2"^(.5){ L^{(a,b)}}_-{\cong}\ar[ddr]^1
&\ar@{}[ddl]|(.6){}="1"|(.75){}="2"
\ar@{=>}"1";"2"^(.3){\eta^{A'\!,B'\!} }
\\&&&&
\\
&F(A)\tm F(B)\ar[rr]_-{F(a)\times F(b)}
&&F(A')\tm F(B')\ar[r]_{R^{A'\!,B'}}
&F(A'\tm B')}
\end{aligned}
\end{equation}
is called the \emph{mate} of $L^{(a,b)}$. A pseudo-commutative square with opposite sides admitting right adjoints, such as the one of diagram~\eqref{diag:L^(a,b)}, is sometimes said to satisfy the Beck-Chevalley condition if its mate is a natural isomorphism.

\begin{Lemma} \label{lemma:mate}
Let $\cB$ be a category with binary products and $P\colon \cX\to\cB$ be an opfibration, and $F\colon \cB\to\CAT$ its associated pseudofunctor.
Moreover, given a pair of arrows $a\colon A\to A'$ and $b\colon B\to B'$ in $\cB$, suppose $L^{A,B}$ and $L^{A'\!,B'}$ have right adjoints $R^{A,B}$ and $R^{A'\!,B'}$ respectively. Then, for an object $(X,Y)$ of $F(A)\times F(B)$, the component $R_{(X,Y)}^{(a,b)}$ of the mate of $L^{(a,b)}$
is the unique $P$-vertical (dashed) arrow such that the following diagram commutes
\begin{equation}\label{diag:mate2}
\begin{aligned}
\xymatrix@C=10ex{
& a_*(X)\times b_*(Y)
\\
X\times Y\ar[r]_-{\widehat{a\times b}}\ar[ur]^{\hat a\times \hat b}
&(a\times b)_*(X\times Y)\ar@{-->}[u]_{R_{(X,Y)}^{(a,b)}}
}
\end{aligned}
\end{equation}
\end{Lemma}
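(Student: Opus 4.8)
The plan is to reduce the whole identification to the universal property of the cocartesian lifting $\widehat{a\times b}$. First note that the asserted \emph{uniqueness} is automatic: the arrow $\widehat{a\times b}\colon X\times Y\to (a\times b)_*(X\times Y)$ is cocartesian over $a\times b$, and $\hat a\times\hat b\colon X\times Y\to a_*(X)\times b_*(Y)$ lies over $a\times b$ as well, since $P$ strictly preserves products gives $P(\hat a\times\hat b)=a\times b$. Hence there is exactly one $P$-vertical arrow $v\colon (a\times b)_*(X\times Y)\to a_*(X)\times b_*(Y)$ with $v\circ\widehat{a\times b}=\hat a\times\hat b$, which is precisely the content of the diagram \eqref{diag:mate2}. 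As the mate component $R^{(a,b)}_{(X,Y)}$ is $P$-vertical over $A'\times B'$ by construction, it therefore suffices to prove the single equation $R^{(a,b)}_{(X,Y)}\circ\widehat{a\times b}=\hat a\times\hat b$.

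To produce a usable expression for the mate, I would unfold the pasting \eqref{diag:mate} at $(X,Y)$, writing $Z=X\times Y$ and $W=(a\times b)_*(Z)$. Using the explicit unit $\eta^{A',B'}_{W}=\langle\hat\pi_1,\hat\pi_2\rangle$ from Lemma~\ref{lemma:R} (where here $\hat\pi_i$ are the cocartesian liftings at $W$ of the projections of $A'\times B'$) together with the fact that $R^{A',B'}$ acts on a morphism as a product of morphisms, the standard description of mates presents the component as a pairing $R^{(a,b)}_{(X,Y)}=\langle\varphi_1\circ\hat\pi_1,\ \varphi_2\circ\hat\pi_2\rangle\colon W\to a_*(X)\times b_*(Y)$, where $(\varphi_1,\varphi_2)$ is the image under $F(a)\times F(b)$ of the counit $\epsilon^{A,B}_{(X,Y)}=(\epsilon_1,\epsilon_2)$ precomposed with the inverse of $L^{(a,b)}_{Z}$; concretely $\varphi_1=a_*(\epsilon_1)\circ(L^{(a,b)}_{Z,1})^{-1}$ and $\varphi_2=b_*(\epsilon_2)\circ(L^{(a,b)}_{Z,2})^{-1}$. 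By the universal property of the product $a_*(X)\times b_*(Y)$ in $\cX$, the target equation then reduces to the two componentwise identities $\varphi_1\circ\hat\pi_1\circ\widehat{a\times b}=\hat a\circ q_1$ and $\varphi_2\circ\hat\pi_2\circ\widehat{a\times b}=\hat b\circ q_2$, where $q_1,q_2$ denote the projections of the product $Z=X\times Y$ in $\cX$.

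The heart of the argument is the first of these identities. Recall from the discussion following Lemma~\ref{lemma:oplax_product_2} that $L^{(a,b)}_{Z,1}$ is the $P$-vertical comparison between the two cocartesian liftings at $Z$ of $a\circ\pi_1=\pi_1\circ(a\times b)$, namely $\hat a\circ\hat\pi_1\colon Z\to \pi_{1*}(Z)\to a_*\pi_{1*}(Z)$ and $\hat\pi_1\circ\widehat{a\times b}\colon Z\to W\to\pi_{1*}(W)$; this is exactly the relation $(L^{(a,b)}_{Z,1})^{-1}\circ\hat\pi_1\circ\widehat{a\times b}=\hat a\circ\hat\pi_1$. Feeding this into $\varphi_1$ and then using the functoriality of change of base, which gives $a_*(\epsilon_1)\circ\hat a=\hat a\circ\epsilon_1$ (comparing the cocartesian liftings of $a$ at $\pi_{1*}(Z)$ and at $X$), together with the counit relation $\epsilon_1\circ\hat\pi_1=q_1$ of Lemma~\ref{lemma:R}, yields
\[
\varphi_1\circ\hat\pi_1\circ\widehat{a\times b}
= a_*(\epsilon_1)\circ\hat a\circ\hat\pi_1
= \hat a\circ\epsilon_1\circ\hat\pi_1
= \hat a\circ q_1 ,
\]
which is the first component of $\hat a\times\hat b$; the second identity is symmetric. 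Combining with the reduction of the first paragraph closes the proof.

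I expect the only real obstacle to be bookkeeping rather than any conceptual difficulty. One must keep straight the variance of $L^{(a,b)}$, which points from $(F(a)\times F(b))\circ L^{A,B}$ to $L^{A',B'}\circ F(a\times b)$ so that the mate involves its inverse, and one must carefully juggle the \emph{three} nested layers of cocartesian liftings present here — those of $a\times b$, of the base projections $\pi_1,\pi_2$, and of $a$ and $b$ — against the $P$-vertical comparison isomorphisms. No input beyond the universal property of cocartesian liftings and the explicit formulas of Lemma~\ref{lemma:R} is needed; diagram~\eqref{diag:counit} and the componentwise description of $L^{(a,b)}$ are the right tools to organize the chase.
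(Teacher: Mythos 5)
Your proof is correct and takes essentially the same route as the paper's: both unfold the mate component as the triple composite (image of the counit)\,$\circ$\,(pseudonaturality comparison)\,$\circ$\,(unit), substitute the explicit formulas $\eta^{A'\!,B'}_{(a\times b)_*(X\times Y)}=\langle\hat\pi_1,\hat\pi_2\rangle$ and $\epsilon_i\circ\hat\pi_i=\pi_i$ from the proof of Lemma~\ref{lemma:R}, and then invoke cocartesianness of $\widehat{a\times b}$ to reduce the identification to the componentwise triangle identities you verify (the paper packages this verification as a single pasted diagram composed with the product projections, which is the same computation). If anything, you are slightly more careful than the paper about the variance of $L^{(a,b)}$: with the component direction $L^{(a,b)}_{Z,1}\colon a_*\pi_{1*}(Z)\to\pi_{1*}(a\times b)_*(Z)$ fixed in the text, the mate formula indeed uses its inverse, which the paper's notation glosses over.
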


\begin{proof}
The mate of $L^{(a,b)}$ of diagram \eqref{diag:mate} consists in the following vertical composition of the  three horizontal whiskerings:
\[
(R^{A'\!,B'}\circ F(a)\tm F(b) \circ\epsilon^{A,B})
(R^{A'\!,B'}\circ L^{(a,b)}\circ R^{A,B})
(\eta^{A'\!,B'}\circ F(a\tm b)\circ R^{A,B}).
\]
In order to determine its component at an object $(X,Y)$ of $F(A)\times F(B)$,
we can calculate: first
\[
(\eta^{A'\!,B'}\circ F(a\tm b)\circ R^{A,B})_{(X,Y)}=\eta^{A'\!,B'}_{F(a\tm b)\circ R^{A,B}(X,Y)}=\eta^{A'\!,B'}_{(a\tm b)_*(X\tm Y)}=\langle \hat\pi_1,\hat\pi_2 \rangle,
\]
where the last equality follows from the definition of the unit of the adjunction given in  the proof of Lemma~\ref{lemma:R}, then
\[
(R^{A'\!,B'}\circ L^{(a,b)}\circ R^{A,B})_{(X,Y)}=R^{A'\!,B'}\big( L^{(a,b)}_{X\tm Y}\big)=L^{(a,b)}_{X\tm Y,1}\times L^{(a,b)}_{X\tm Y,2}
\]
and last:
\begin{align*}
(R^{A'\!,B'}\circ F(a) \times F(b) & \circ\epsilon^{A,B})_{(X,Y)} =(R^{A'\!,B'}\circ F(a)\tm F(b)) (\epsilon_1,\epsilon_2) = \\
&=R^{A'\!,B'}(a_*(\epsilon_1),b_*(\epsilon_2)) =a_*(\epsilon_1)\times b_*(\epsilon_2),
\end{align*}
where $(\epsilon_1,\epsilon_2)=\epsilon^{A,B}_{(X,Y)}$ is the component of the counit of the adjunction.

Hence we can conclude:
\[
R^{(a,b)}_{(X,Y)}=\big( a_*(\epsilon_1)\times b_*(\epsilon_2)\big)\circ \big(L^{(a,b)}_{X\tm Y,1}\times L^{(a,b)}_{X\tm Y,2} \big)\circ \langle \hat\pi_1,\hat\pi_2 \rangle\,.
\]
Now, let us consider the following diagram.
\[
\xymatrix@C=4ex{
&a_*(X)\times b_*(Y)\ar[r]^-{\pi_2}
&b_*(Y)
\\
&a_*\pi_{1*}(X\tm Y)\times b_*\pi_{2*}(X\tm Y)\ar[r]^-{\pi_2}\ar[u]_{a_*(\epsilon_1)\times b_*(\epsilon_2)}
&b_*\pi_{2*}(X\tm Y)\ar[u]_{b_*(\epsilon_2)}
\\
&\pi_{1*}(a\tm b)_{*}(X\tm Y)\times \pi_{2*}(a\tm b)_{*}(X\tm Y)\ar[r]^-{\pi_2}\ar[u]_{L^{(a,b)}_{X\tm Y,1}\times L^{(a,b)}_{X\tm Y,2}}
&\pi_{2*}(a\tm b)_{*}(X\tm Y)\ar[u]_{ L^{(a,b)}_{X\tm Y,2}}
\\
X\times Y\ar@{.}[d]\ar[r]_-{\widehat{a\times b}}\ar@/^4ex/[ur]_-{\ \ \langle\hat\pi_1\widehat{a\tm b},\hat\pi_2\widehat{a\tm b}\rangle}
\ar@/^7ex/[uur]^{\langle\hat a\hat\pi_1  ,\hat b\hat \pi_2  \rangle}
\ar@/^12ex/[uuur]^{\hat a\times\hat b}
&(a\times b)_*(X\times Y)\ar@{.}[d]\ar[u]_{\langle\hat\pi_1,\hat\pi_2\rangle}\ar[r]_-{\hat\pi_2}
&\pi_{2*}(a\tm b)_{*}(X\tm Y)\ar@{.}[d]\ar[u]_1
\\
A\times B\ar[r]_-{a\times b}&A'\times B'\ar[r]_{\pi_2}
&B'
}
\]
The central vertical composite is precisely the component $R^{(a,b)}_{(X,Y)}$ computed above, therefore, since $\widehat{a\times b}$ is cocartesian, the proof will be completed after showing that all triangles on the left are commutative.
This is easily done by composing such triangles with the product projections (for the sake of clarity, only the second projections are shown in the diagram).
\end{proof}

\begin{Proposition} \label{prop:R}
Let $\cB$ be a category with binary products and $P\colon \cX\to\cB$  be an opfibration, and $F\colon \cB\to\CAT$ its associated pseudofunctor. Then the following statements are equivalent.
\begin{itemize}
\item[i.] The pseudonatural transformation $L$ of Lemma~\ref{lemma:oplax_product_2} has a right adjoint $R$ in the hom-2-category $\mathbf{CAT}(\cB\times\cB,\CAT)$.
\item[ii.] For every pair of objects $A,B$ in $\cB$, $L^{A,B}$ has right adjoint $R^{A,B}$, and for every pair of arrows $a,b$ in  $\cB$, $L^{(a,b)}$ satisfies the Beck-Chevalley condition.
\item[iii.] For every pair of objects $A,B$ in $\cB$, $L^{A,B}$ has right adjoint $R^{A,B}$, and for every pair of arrows $a,b$ as in Lemma~\ref{lemma:mate},  $R_{(X,Y)}^{(a,b)}$ is an isomorphism.
\item[iv.]  The category $\cX$ has binary products, $P$ strictly preserves them and products of cocartesian arrows are cocartesian.
\end{itemize}
\end{Proposition}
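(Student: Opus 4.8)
The plan is to establish that the four conditions are equivalent via the chain (i) $\Leftrightarrow$ (ii) $\Leftrightarrow$ (iii) and (iii) $\Leftrightarrow$ (iv), so as to cleanly separate the abstract $2$-categorical content from the concrete fibrational content. The first three have already been prepared by Lemma~\ref{lemma:mate} and Corollary~\ref{corollary:R}, so most of the work is conceptual bookkeeping.

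First I would dispose of (ii) $\Leftrightarrow$ (iii), which is essentially a restatement. By definition, $L^{(a,b)}$ satisfies the Beck-Chevalley condition precisely when its mate, the pasting \eqref{diag:mate}, is a natural isomorphism. The mate is always a natural transformation, and Lemma~\ref{lemma:mate} identifies its component at $(X,Y)$ with the arrow $R^{(a,b)}_{(X,Y)}$; hence it is a natural isomorphism exactly when every $R^{(a,b)}_{(X,Y)}$ is invertible. This yields (ii) $\Leftrightarrow$ (iii) immediately.

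Next, (i) $\Leftrightarrow$ (ii) is an instance of the general theory of adjoints in a $2$-category of pseudofunctors (the calculus of mates). For (i) $\Rightarrow$ (ii), evaluation at a pair $(A,B)$ is a $2$-functor $\mathbf{CAT}(\cB\times\cB,\CAT)\to\CAT$, and $2$-functors preserve adjunctions, so each $L^{A,B}$ acquires a right adjoint $R^{A,B}$ as the evaluation of $R$; moreover, the modification axioms satisfied by the unit and counit of $L\dashv R$ force the pseudonaturality $2$-cells of $R$ to coincide with the mates of the $L^{(a,b)}$, and being components of a pseudonatural transformation these are invertible, which is exactly Beck-Chevalley. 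Conversely, for (ii) $\Rightarrow$ (i), I would assemble the pointwise right adjoints $R^{A,B}$ into a pseudonatural transformation $R$ by taking its pseudonaturality $2$-cells to be the mates $R^{(a,b)}$: Beck-Chevalley guarantees these are invertible as required, and the coherence axioms for $R$ follow from those for $L$ by the functoriality of the mate construction. The pointwise units $\eta^{A,B}$ and counits $\epsilon^{A,B}$ then assemble into modifications $\eta$ and $\epsilon$ (the modification axiom being precisely the mate compatibility), and the triangle identities, holding componentwise, hold as modifications. I expect this to be the main obstacle: although it is standard mate calculus, verifying the pseudonaturality of $R$ and the modification axioms at the level of pseudofunctors requires genuine care with the coherence data.

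Finally I would prove (iii) $\Leftrightarrow$ (iv). By Corollary~\ref{corollary:R}, the existence of all right adjoints $R^{A,B}$ is equivalent to $\cX$ having binary products strictly preserved by $P$, in which case $X\times Y=R^{P(X),P(Y)}(X,Y)$. Granting this common hypothesis, it remains to match ``$R^{(a,b)}_{(X,Y)}$ is an isomorphism'' with ``products of cocartesian arrows are cocartesian''. By diagram~\eqref{diag:mate2}, $R^{(a,b)}_{(X,Y)}$ is the unique vertical arrow with $R^{(a,b)}_{(X,Y)}\circ\widehat{a\times b}=\hat a\times\hat b$. If products of cocartesian arrows are cocartesian, then $\hat a\times\hat b$ is a cocartesian lifting of $a\times b$ at $X\times Y$, as is $\widehat{a\times b}$, so the vertical comparison $R^{(a,b)}_{(X,Y)}$ between them is invertible. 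Conversely, if $R^{(a,b)}_{(X,Y)}$ is invertible then $\hat a\times\hat b$ is a vertical isomorphism followed by a cocartesian arrow, hence cocartesian; and a general pair of cocartesian arrows factors as $f=u\circ\hat a$ and $g=v\circ\hat b$ with $u,v$ vertical isomorphisms, so their product $f\times g=(u\times v)\circ(\hat a\times\hat b)$ is cocartesian, since $P$ preserves products and thus $u\times v$ is again a vertical isomorphism. This closes all the equivalences.
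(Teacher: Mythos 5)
Your proposal is correct and follows essentially the same route as the paper: the identical decomposition $(i)\Leftrightarrow(ii)$, $(ii)\Leftrightarrow(iii)$ via Lemma~\ref{lemma:mate}, and $(iii)\Leftrightarrow(iv)$ via Corollary~\ref{corollary:R} together with the observation that invertibility of $R^{(a,b)}_{(X,Y)}$ in diagram~\eqref{diag:mate2} is exactly cocartesianness of products of cocartesian arrows. The only difference is that for $(i)\Leftrightarrow(ii)$ the paper simply cites the general fact on fibred adjunctions (\cite[Proposition 8.4.2]{Borceux1994}), whereas you prove that fact directly by mate calculus (pointwise adjoints plus invertible mates assemble into a pseudonatural right adjoint); your extra care in $(iii)\Rightarrow(iv)$, reducing arbitrary cocartesian arrows to chosen liftings composed with vertical isomorphisms, fills in a detail the paper leaves implicit.
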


\begin{proof}
$(i. \Leftrightarrow ii.)\ $ This is a general fact concerning fibred adjunctions; see for instance  \cite[Proposition 8.4.2]{Borceux1994}.

$(ii. \Leftrightarrow iii.)\ $ This follows directly from Lemma~\ref{lemma:mate}.

$(iii. \Leftrightarrow iv.)\ $ Let us notice that the product of two cocartesian arrows $\hat a,\hat b$ in $\cX$ is cocartesian if, and only if, the comparison $R_{X,Y}^{(a,b)}$ of diagram \eqref{diag:mate2} is an isomorphism. Then the result is a consequence of Corollary~\ref{corollary:R}.
\end{proof}

\begin{Theorem} \label{theorem:adj_make_prod}
Let $\cB$ be a category with finite products and $P\colon \cX\to\cB$ be an opfibration, and $F\colon \cB\to\CAT$ its associated pseudofunctor. Then the following statements are equivalent.
\begin{itemize}
\item[i.] The category $\cX$ has finite products and
\[
P\colon (\cX,\times, I_\cX)\longrightarrow (\cB,\times, I_\cB)
\]
is a monoidal opfibration, i.e.\ $P$ is a \emph{cartesian monoidal opfibration}.
\item[ii.] The pseudonatural transformation $L^1$ of Lemma~\ref{lemma:oplax_unit} has a right adjoint $R^1$ in the hom-2-category $\mathbf{CAT}(\bI,\CAT)$ and the pseudonatural transformation $L$ of Lemma~\ref{lemma:oplax_product_2} has a right adjoint $R$ in the hom-2-category $\mathbf{CAT}(\cB\times\cB,\CAT)$.
\item[iii.] $F$ is endowed with a lax symmetric monoidal structure
\[
(F,R,R^1,\omega',\zeta',\xi',\theta')\colon (\cB,\times, I_\cB)\longrightarrow (\CAT,\times, \cI)
\]
where, for $X,Y$ in $\cX$, $R^{P(X),P(Y)}(X,Y)=X\times Y$ and $R^1(\star)=I_\cX$.
\end{itemize}
\end{Theorem}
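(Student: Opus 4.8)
The plan is to establish the three-way equivalence by proving (i)$\,\Leftrightarrow\,$(ii) directly and then closing the cycle (i)$\,\Rightarrow\,$(iii)$\,\Rightarrow\,$(ii). Essentially all the technical content has already been isolated in the preceding lemmas, so the proof amounts to assembling them; the only genuinely new point is to keep track of the invertibility of the monoidal constraints.

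For (i)$\,\Leftrightarrow\,$(ii) I would split the finite-product data into its terminal and binary parts. By Lemma~\ref{lemma:R1}, the existence of a right adjoint $R^1$ to $L^1$ (which, since $\bI$ is terminal, is just an ordinary adjunction of functors) is equivalent to $\cX$ having a terminal object $I_\cX$ strictly preserved by $P$. By Proposition~\ref{prop:R}, equivalence of items \emph{i} and \emph{iv}, the existence of a right adjoint $R$ to $L$ in the hom-$2$-category is equivalent to $\cX$ having binary products, strictly preserved by $P$, with products of cocartesian arrows again cocartesian. Conjoining the two, (ii) says exactly that $\cX$ has finite products, $P$ strictly preserves them, and the product of cocartesian maps is cocartesian. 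Read in the cartesian symmetric monoidal structures on $\cX$ and $\cB$ — where ``strict monoidal functor'' means ``strict preservation of finite products'' and ``tensor of cocartesian maps is cocartesian'' means ``product of cocartesian maps is cocartesian'' — Proposition~\ref{prop:Prop_3.2_MV} identifies these conditions with $P$ being a cartesian monoidal opfibration, which is (i).

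For (i)$\,\Rightarrow\,$(iii) I would invoke the $2$-categorical equivalence between monoidal opfibrations and lax monoidal pseudofunctors recalled after Proposition~\ref{prop:Prop_3.2_MV} (Proposition~3.7 of \cite{MV2020}): a cartesian monoidal opfibration transports to a lax \emph{symmetric} monoidal structure on $F$ with \emph{invertible} constraint modifications, the symmetry being the canonical one coming from the cartesian braiding. The components are read off from Lemma~\ref{lemma:R1} and Lemma~\ref{lemma:R} as $R^1(\star)=I_\cX$ and $R^{P(X),P(Y)}(X,Y)=X\times Y$, as in (iii). Conversely, for (iii)$\,\Rightarrow\,$(ii), the prescription $R^1(\star)=I_\cX$ forces $P(I_\cX)=I_\cB$, so Lemma~\ref{lemma:R1} gives $L^1\dashv R^1$, while $R^{P(X),P(Y)}(X,Y)=X\times Y$ supplies precisely the products $X\times Y$ in $\cX$ lying over $A\times B$ required by Lemma~\ref{lemma:R}, yielding the pointwise adjunctions $L^{A,B}\dashv R^{A,B}$. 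The invertible pseudonaturality modifications of the lax structure map $R$ are then the Beck--Chevalley isomorphisms of item \emph{iii} of Proposition~\ref{prop:R}, and these promote the pointwise adjunctions to an adjunction $L\dashv R$ of pseudonatural transformations, giving (ii).

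The step I expect to demand the most care is matching, in (iii)$\,\Rightarrow\,$(ii), the pseudonaturality datum of the lax structure map $R$ with the comparison $R^{(a,b)}_{(X,Y)}$ of Lemma~\ref{lemma:mate}, so that Proposition~\ref{prop:R} genuinely applies; this is a mate-calculus bookkeeping check rather than a conceptual difficulty. In the (i)$\,\Rightarrow\,$(iii) direction the delicate point is the invertibility of the lax constraints $\omega',\zeta',\xi',\theta'$: mates of isomorphisms need not be invertible in general, so I would either take this from the cited equivalence of \cite{MV2020} directly, or reconstruct $\omega',\zeta',\xi',\theta'$ from the $2$-dimensional universal property of products in $\CAT$, exactly as the oplax constraints $\omega,\zeta,\xi,\theta$ are built in the proof of Proposition~\ref{prop:oplax_for_free}, whence their invertibility.
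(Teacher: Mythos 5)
Your overall architecture is sound, and two of its three legs coincide with the paper's proof: you establish (i)$\Leftrightarrow$(ii) exactly as the paper does, by combining Lemma~\ref{lemma:R1}, the equivalence i$\Leftrightarrow$iv of Proposition~\ref{prop:R}, and Proposition~\ref{prop:Prop_3.2_MV}; and you obtain (i)$\Rightarrow$(iii) by the same appeal to the 2-equivalence of \cite{MV2020} that the paper makes. (On the invertibility of $\omega',\zeta',\xi',\theta'$: the paper does construct them as mates, and proves invertibility not via the 2-dimensional universal property of products but by identifying each component as the unique $P$-vertical comparison between two maps that are themselves cocartesian or invertible, e.g.\ $\hat\alpha_{A,B,C}$ and $\alpha_{X,Y,Z}$; your first fallback, citing \cite{MV2020}, matches the paper, while your second is only a plausible sketch.) The genuine divergence is your third leg, (iii)$\Rightarrow$(ii), which the paper never attempts: it closes the equivalence with (i)$\Leftrightarrow$(iii), both directions read off the cited 2-equivalence.

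In that third leg, the step you dismiss as ``mate-calculus bookkeeping'' is in fact the crux, and as described it does not go through. Condition (iii) prescribes only the \emph{object} values of $R^{A,B}$ and $R^1$: it says nothing about the action of $R^{A,B}$ on morphisms, does not assert that $R^{A,B}$ is right adjoint to $L^{A,B}$, and its pseudonaturality 2-cells are abstract invertible 2-cells constrained only by the coherence axioms of a lax monoidal pseudofunctor -- they carry no stated relation to the cocartesian liftings $\hat a$, $\hat b$, $\widehat{a\times b}$. So there is nothing to ``match'' against the comparison $R^{(a,b)}_{(X,Y)}$ of Lemma~\ref{lemma:mate}; uniqueness of adjoints does not help, since sharing object values with the adjoint produced by Lemma~\ref{lemma:R} does not make the given $R^{A,B}$ an adjoint. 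What you actually need in order to invoke Proposition~\ref{prop:R} is its condition iv, that products of cocartesian arrows are cocartesian, and extracting this from (iii) requires a genuine argument: for instance, apply the Grothendieck construction of \cite{MV2020} to turn the lax structure into a monoidal opfibration structure $\otimes$ on $\cX$ (so tensors of cocartesian arrows are cocartesian), note that $\otimes$ agrees with the product on objects and has terminal unit, and then show that the canonical semicartesian projections $X\otimes Y\to X$ and $X\otimes Y\to Y$ built from the terminal unit and the unitors exhibit $X\otimes Y$ as a product, whence $\otimes$ is naturally isomorphic to $\times$ and cocartesianness transfers. The mere fact that the object $X\otimes Y$ admits \emph{some} product projections does not make that canonical comparison invertible, so this last step is where the work lies. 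Either supply such an argument, or do what the paper does and route this direction through the 2-equivalence as (iii)$\Rightarrow$(i).
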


\begin{proof}
$(i. \Leftrightarrow ii.)\ $ By Proposition~\ref{prop:Prop_3.2_MV}, we can invoke the double implications $iv.\Leftrightarrow ii.$ of Proposition~\ref{prop:R} and $i.\Leftrightarrow ii.$ of Lemma~\ref{lemma:R1}.

$(i. \Leftrightarrow iii.)\ $ By the symmetric version of the 2-equivalence of \cite[Theorem~3.13]{MV2020},  condition $i.$ on $P$ is equivalent to $F$ being a symmetric monoidal \emph{opindexed category}, and in turn, by \cite[Proposition~3.7]{MV2020}, a lax symmetric monoidal pseudofunctor.

For the reader's convenience, in the following we describe how to obtain the pseudonatural transformations $\omega',\zeta',\xi,\theta'$.

As far as $\omega'$ is concerned, we can consider the pasting diagram
\[
\xymatrix@R=9ex@C=4ex{
(F(A)\tm F(B))\tm F(C)\ar[d]_{R^{A,B}\tm1}\ar[ddrr]^1
&&\ar@{}[ddll]|(.55){}="u4"|(.65){}="u3"|(.8){}="u2"|(.9){}="u1"
\ar@{=>}"u1";"u2"^{\epsilon^{A\tm B,C}}
\ar@{=>}"u3";"u4"_{\epsilon^{A,B}\tm1}
\\
(F(A\tm B))\tm F(C)\ar[d]_{R^{A\tm B,C}}\ar[dr]^1
\\
F((A\tm B)\tm C)\ar[r]_-{L^{A\tm B,C}}\ar[d]_{F(\alpha_{A,B,C})}
&(F(A\tm B))\tm F(C)\ar[r]_-{L^{A,B}\tm1}
&(F(A)\tm F(B))\tm F(C)\ar[d]^{\alpha_{F(A),F(B),F(C)}}
\\
\ar@{}[urr]|(.4){}="1"|(.6){}="2"
F(A\tm (B\tm C))\ar[r]^-{L^{A\tm B,C}}\ar[ddrr]_1
&F(A)\tm F(B\tm C)\ar[r]^-{L^{A,B}\tm1}\ar[dr]_1
&F(A)\tm (F(B)\tm F(C))\ar[d]^{1\tm R^{B,C}}
\\
&&F(A)\tm F(B\tm C)\ar[d]^{R^{A,B\tm C}}
\\
\ar@{}[uurr]|(.55){}="d1"|(.65){}="d2"|(.8){}="d3"|(.9){}="d4"
\ar@{=>}"d1";"d2"^{1\tm \eta^{B,C}}
\ar@{=>}"d3";"d4"_{\eta^{A,B\tm C}}
&&F(A\tm (B\tm C))
\ar@{=>}"1";"2"_{\ \ \ \omega_{A,B,C}}^{\cong\ \ }
}
\]
which is nothing but the mate of $\omega_{A,B,C}$.
It is possible to prove that, for $X,Y,Z$ in $\cX$ such that $P(X)=A$, $P(Y)=B$ and $P(Z)=C$, the component $(\omega'_{A,B,C})_{X,Y,Z}$ is the unique $P$-vertical (dashed) arrow that makes the following diagram commute
\[
\xymatrix@C=10ex{
&X\tm (Y\tm Z)
\\
(X\tm Y)\tm Z\ar[r]_-{\hat\alpha_{A,B,C}}\ar[ur]^-{\alpha_{X,Y,Z}}
&\alpha_{A,B,C\,*}((X\tm Y)\tm Z)\ar@{-->}[u]}
\]
It is an isomorphism since both $\hat\alpha_{A,B,C}$ and $\alpha_{X,Y,Z}$ are.

As far as $\zeta'$ is concerned (the construction of $\xi'$ is similar), we can consider the pasting diagram
\[
\xymatrix@R=10ex{
F(A)\tm \cI\ar[d]_{1\tm R^1}\ar[ddrr]^1
&&\ar@{}[ddll]|(.55){}="u4"|(.65){}="u3"|(.8){}="u2"|(.9){}="u1"
\ar@{=>}"u1";"u2"^{\epsilon^{A,I_\cB}}
\ar@{=>}"u3";"u4"_{1\tm \epsilon^1}
\\
F(A)\tm F(I_\cB)\ar[d]_{R^{A,I_\cB}}\ar[dr]^1
\\
F(A\tm I_\cB)\ar[r]_-{L^{A\tm I_\cB}}\ar[d]_{F(\pi_1)}
&F(A)\tm F(I_\cB)\ar[r]_-{1\tm L^1}
&F(A)\tm \cI\ar[d]^{\pi_1}
\\
F(A)\ar[rr]_1
\ar@{}[urr]|(.4){}="1"|(.6){}="2"
\ar@{=>}"1";"2"_{\zeta_A}^{\cong}
&&F(A)
}
\]
which is nothing but the mate of $\zeta_A$.
It is possible to prove that, for $X$ in $\cX$ such that $P(X)=A$, the component $(\zeta'_{A})_{X}$ is the unique $P$-vertical (dashed) arrow that makes the following diagram commute:
\[
\xymatrix@C=10ex{
&X
\\
X\tm I_\cX\ar[r]_-{\hat\pi_1}\ar[ur]^-{\pi_1}
&\pi_{1*}(X\tm I_\cX)\ar@{-->}[u]}
\]
It is an isomorphism since both $\hat\pi_1$ and $\pi_1$ are.
Finally, for all $A,B$ in $\cB$, $\theta'_{A,B}$ is obtained as the mate of $\theta_{A,B}$, which is an isomorphism for the same argument as above.
\end{proof}

\begin{Remark} \label{rmk:sym}
Notice that, under the hypothesis of point $iii.$ of Theorem \ref{theorem:adj_make_prod}, $F$ sends commutative monoids in $(\cB,\times, I_\cB)$ to symmetric monoidal categories. The following diagram displays how the braiding $\tau_{X,Y}$ is constructed for given objects $X$ and $Y$ over a commutative monoid $A$.
\[
\xymatrix@C=5ex{
X \times Y \ar@{.}[ddddd] \ar@/_{8ex}/[ddrddr]_{1_{X\times Y}} \ar[ddr]_{\sigma} \ar[dr]^{\widehat{\sigma}} \ar[rrr]^{\widehat{m}} & & & X \otimes Y \ar[d]_{\phi^{-1}_{\sigma,m}}^{\sim} \ar@/^{10ex}/[dd]^{\tau_{X,Y}} \\
& \sigma_*(X \times Y) \ar[d]_{\sim}^{\theta'_{X,Y}} \ar[rr]^{\widehat{m}} & & m_*\sigma_*(X \times Y) \ar[d]_{m_*(\theta')}^{\sim} \\
& Y \times X \ar@{.}[ddd] \ar[dr]^{\widehat{\sigma}} \ar[ddr]_{\sigma} \ar[rr]^{\widehat{m}} & & Y \otimes X \ar[d]_{\phi^{-1}_{\sigma,m}}^{\sim} \ar@/^{10ex}/[dd]^{\tau_{Y,X}} \\
& & \sigma_*(Y \times X) \ar[d]_{\sim}^{\theta'_{Y,X}} \ar[r]^{\widehat{m}} & m_*\sigma_*(Y \times X) \ar[d] \ar[d]_{m_*(\theta')}^{\sim} \\
& & X \times Y \ar@{.}[d] \ar[r]^{\widehat{m}} & X \otimes Y \ar@{.}[d] \\
A \times A \ar[r]^{\sigma} \ar@/_{6ex}/[rrr]_m & A \times A \ar[r]^{\sigma} \ar@/_{3ex}/[rr]_m & A \times A \ar[r]^m & A
}
\]
\end{Remark}

\section{When groups are sent to 2-groups} \label{sec:groups}

\begin{Lemma} \label{lemma:unit_terminal}
Let the equivalent conditions of Lemma~\ref{lemma:R1} hold. The following statements are equivalent.
\begin{itemize}
\item[i.] The unit of the adjunction $\eta^1\colon 1_{F(I_\cB)}\Rightarrow R^1\circ L^1$ is a natural isomorphism.
\item[ii.] For all $X$ in $F(I_{\cB})$, the terminal map $\tau_X\colon X\to I_{\cX}$ is $P$-cocartesian.	
\item[iii.] For all $X$ in $\cX$, the terminal map $\tau_X\colon X\to I_{\cX}$ is $P$-cocartesian.
\end{itemize}
\end{Lemma}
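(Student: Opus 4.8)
The plan is to exploit the explicit description of the unit $\eta^1$ recorded in the proof of Lemma~\ref{lemma:R1}, where it is shown that for $X\in F(I_\cB)$ the component $\eta^1_X\colon X\to R^1L^1(X)=I_\cX$ is exactly the terminal map $\tau_X$. With this identification the equivalence $(i.\Leftrightarrow ii.)$ becomes almost immediate, and the remaining work is to upgrade condition $(ii.)$, stated only for objects of the fibre $F(I_\cB)$, to statement $(iii.)$ about all objects of $\cX$.

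For $(i.\Leftrightarrow ii.)$ I would first note that, being the unit of an adjunction, $\eta^1$ is automatically natural, so it is a natural isomorphism precisely when each component $\eta^1_X=\tau_X$ is an isomorphism. Since $X$ and $I_\cX$ both lie over $I_\cB$, the map $\tau_X$ is $P$-vertical: its image under $P$ is the terminal map $I_\cB\to I_\cB$, i.e.\ the identity. The key fact here is the standard one that a $P$-vertical arrow is cocartesian if and only if it is an isomorphism: a vertical cocartesian map admits a vertical retraction by its own universal property and is then seen to be invertible, while a vertical isomorphism is trivially cocartesian. Hence $\eta^1_X$ is an isomorphism for every $X$ if and only if $\tau_X$ is cocartesian for every $X\in F(I_\cB)$, which is exactly $(ii.)$.

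For $(ii.\Leftrightarrow iii.)$ the implication $(iii.\Rightarrow ii.)$ is trivial, since $F(I_\cB)$ is a subcategory of $\cX$. For $(ii.\Rightarrow iii.)$ I would reuse the factorisation from the proof of the implication $(ii.\Rightarrow i.)$ of Lemma~\ref{lemma:R1}: given an arbitrary object $X$ of $\cX$ with $P(X)=A$, the terminal map $\tau_X\colon X\to I_\cX$ decomposes as
\[
\tau_X \;=\; t\circ\hat\tau_A,
\]
where $\hat\tau_A\colon X\to \tau_{A*}(X)$ is the cocartesian lifting of the terminal map $\tau_A\colon A\to I_\cB$, and $t\colon \tau_{A*}(X)\to I_\cX$ is the terminal map of the object $\tau_{A*}(X)$, which lies in $F(I_\cB)$. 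By $(ii.)$ the map $t$ is cocartesian, and $\hat\tau_A$ is cocartesian by construction; since cocartesian arrows are closed under composition, $\tau_X$ is cocartesian, giving $(iii.)$.

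The argument is essentially a bookkeeping exercise, so I do not expect a genuine obstacle; the only points requiring care are the two standard facts about cocartesian arrows invoked above — that a vertical arrow is cocartesian exactly when it is invertible, and that cocartesian arrows compose — together with the observation that the vertical factor $t$ appearing in the factorisation of $\tau_X$ is literally the terminal map of an object of the fibre $F(I_\cB)$, so that condition $(ii.)$ applies to it.
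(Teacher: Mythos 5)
Your proposal is correct and follows essentially the same route as the paper: both rest on the identification $\eta^1_X=\tau_X$ from Lemma~\ref{lemma:R1}, the fact that a $P$-vertical arrow is cocartesian precisely when it is invertible, and the factorisation of $\tau_X$ (for arbitrary $X$ in $\cX$) as a cocartesian lifting of $\tau_{P(X)}$ followed by a terminal map in the fibre $F(I_\cB)$, together with closure of cocartesian arrows under composition. The only difference is organisational — you prove the two biconditionals $(i.\Leftrightarrow ii.)$ and $(ii.\Leftrightarrow iii.)$ directly, whereas the paper argues along the cycle $(i.\Rightarrow ii.\Rightarrow iii.\Rightarrow i.)$ — which is immaterial.
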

\begin{proof}
$(i. \Rightarrow ii.)\ $ This is obvious, since, as we saw in Lemma~\ref{lemma:R1}, for $X$ in $F(I_\cB)$, $\eta^1_X=\tau^{\cX}_X$.

$(ii. \Rightarrow iii.)\ $ For $X$ in $\cX$, the terminal map $\tau_X$ is the composite of a cocartesian lifting followed by a terminal map in $F(I_\cB)$, which is $P$-cocartesian by $ii.$

$(iii. \Rightarrow i.)\ $ For all $X$ in $F(I_\cB)$, the $P$-vertical  map $\eta^1_X=\tau_X$ is  $P$-cocartesian, hence an isomorphism.
\end{proof}

\begin{Lemma} \label{lemma:unit_product}
Let $P\colon \cX\to\cB$ be a cartesian monoidal opfibration, and $F\colon \cB\to\CAT$ its associated pseudofunctor. The following statements are equivalent.
\begin{itemize}
\item[i.] For all $A$ in $\cB$, the unit of the adjunction $\eta^{A,A}\colon 1_{F(A\times A)}\Rightarrow R^{A,A}\circ L^{A,A}$ is a natural isomorphism.
\item[ii.] For all $X$ in $\cX$,  the diagonal map $\Delta_X\colon X\to X\times X$ is $P$-cocartesian.	
\end{itemize}
\end{Lemma}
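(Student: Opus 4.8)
The plan is to reduce both implications to the single identity
\[
\eta^{A,A}_Z=\langle\hat\pi_1,\hat\pi_2\rangle=(\hat\pi_1\times\hat\pi_2)\circ\Delta_Z,
\]
valid for every $Z$ in $F(A\times A)$: the first equality is the description of the unit obtained in the proof of Lemma~\ref{lemma:R}, and the second simply expresses the pairing $\langle\hat\pi_1,\hat\pi_2\rangle$ as $(\hat\pi_1\times\hat\pi_2)\circ\Delta_Z$. The reason for rewriting $\eta^{A,A}_Z$ this way is a matter of bookkeeping of base maps: $\hat\pi_1\times\hat\pi_2$ lies over $\pi_1\times\pi_2\colon(A\times A)\times(A\times A)\to A\times A$ while $\Delta_Z$ lies over $\Delta_{A\times A}$, and a direct computation gives $(\pi_1\times\pi_2)\circ\Delta_{A\times A}=1_{A\times A}$. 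Hence $\eta^{A,A}_Z$ always lies over an identity of $\cB$.

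For $(i.\Rightarrow ii.)$ I would take $X$ over $A$ and set $Z=\Delta_{A*}(X)$, with cocartesian lifting $\hat\Delta_A\colon X\to Z$. Since $\pi_i\circ\Delta_A=1_A$, each composite $\hat\pi_i\circ\hat\Delta_A\colon X\to\pi_{i*}(Z)$ is a cocartesian lifting of an identity, hence an isomorphism; consequently $\hat\pi_1\hat\Delta_A\times\hat\pi_2\hat\Delta_A$ is an isomorphism and one computes $\eta^{A,A}_Z\circ\hat\Delta_A=\langle\hat\pi_1\hat\Delta_A,\hat\pi_2\hat\Delta_A\rangle=(\hat\pi_1\hat\Delta_A\times\hat\pi_2\hat\Delta_A)\circ\Delta_X$. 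By $(i.)$ the arrow $\eta^{A,A}_Z$ is an isomorphism, so $\eta^{A,A}_Z\circ\hat\Delta_A$ is $P$-cocartesian (an isomorphism following the cocartesian $\hat\Delta_A$); cancelling the isomorphism $\hat\pi_1\hat\Delta_A\times\hat\pi_2\hat\Delta_A$ then forces $\Delta_X$ to be $P$-cocartesian.

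For $(ii.\Rightarrow i.)$ I would use the defining feature of a cartesian monoidal opfibration, namely that products of $P$-cocartesian arrows are $P$-cocartesian (Proposition~\ref{prop:Prop_3.2_MV}, cf.\ condition~\emph{iv.}\ of Proposition~\ref{prop:R}). Given any $Z$ in $F(A\times A)$, apply hypothesis $(ii.)$ to the object $X=Z$, regarded over the base $A\times A$: the diagonal $\Delta_Z$ is $P$-cocartesian. Since $\hat\pi_1\times\hat\pi_2$ is a product of cocartesian liftings, it too is cocartesian, so the composite $\eta^{A,A}_Z=(\hat\pi_1\times\hat\pi_2)\circ\Delta_Z$ is a $P$-cocartesian arrow lying over $1_{A\times A}$, hence an isomorphism. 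As $Z$ is arbitrary, $\eta^{A,A}$ is a natural isomorphism.

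The routine part is the manipulation of the cleavage-induced comparison maps; the genuinely load-bearing observations are the factorisation $\eta^{A,A}_Z=(\hat\pi_1\times\hat\pi_2)\circ\Delta_Z$ together with $(\pi_1\times\pi_2)\circ\Delta_{A\times A}=1_{A\times A}$, and the stability of cocartesian arrows under products, which is precisely where the hypothesis that $P$ is a cartesian monoidal opfibration is used. I expect the only delicate step to be the cancellation in $(i.\Rightarrow ii.)$, i.e.\ checking that a $P$-cocartesian arrow which equals an isomorphism composed with $\Delta_X$ compels $\Delta_X$ itself to be $P$-cocartesian.
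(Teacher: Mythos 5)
Your proposal is correct and follows essentially the same route as the paper: the direction $(ii.\Rightarrow i.)$ is literally the paper's argument (writing $\eta^{A,A}_Z=\langle\hat\pi_1,\hat\pi_2\rangle$ as $(\hat\pi_1\times\hat\pi_2)\circ\Delta_Z$, cocartesian and vertical, hence iso), and your $(i.\Rightarrow ii.)$ is the paper's argument in slightly streamlined packaging — the paper constructs the vertical comparisons $\phi_i$, $\phi$ via the universal property of $\hat\Delta_A$ and shows $\phi=(\phi_1\times\phi_2)\circ\eta^{A,A}_{\Delta_{A*}(X)}$ is invertible, whereas you compose with $\hat\Delta_A$ directly and cancel the isomorphism $\hat\pi_1\hat\Delta_A\times\hat\pi_2\hat\Delta_A$, which amounts to the same thing.
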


\begin{proof}
$(i. \Rightarrow ii.)\ $ Given an object $X$ in $\cX$, let $A=P(X)$.
\[
\xymatrix@C=10ex{
&X\tm X
\ar[r]^{\pi_i}
&X
\\
X\ar@{.}[d]\ar[r]_-{\hat\Delta_A}\ar[ur]^{\Delta_X}
&\Delta_{A*}(X)\ar[r]_{\hat\pi_i}\ar@{.}[d]\ar[u]^{\phi}
&\pi_{i*}\Delta_{A*}(X)\ar@{.}[d]\ar[u]_{\phi_i}
\\
A\ar[r]^{\Delta_A}\ar@/_4ex/[rr]_{1_{A}}
&A\tm A\ar[r]^-{\pi_i}
&A
}
\]
Since  $\hat\pi_i\circ\hat\Delta_A$ is cocartesian (for $i=1,2$), then there exists a $P$-vertical $\phi_i$ (necessarily  isomorphism) such that   $\phi_i\circ \hat\pi_i\circ\hat\Delta_A=\pi_i\circ \Delta_X=1_X$. Similarly, because  $\hat\Delta_A$ is cocartesian, there exists a $P$-vertical $\phi$ such that $\phi\circ \hat\Delta_A=\Delta_X$ and $\pi_i\circ \phi=\phi_i\circ \hat\pi_i$. Then, $\phi=\phi_1\tm\phi_2\circ\langle\hat\pi_1,\hat\pi_2 \rangle=\phi_1\tm\phi_2\circ \eta^{A,A}_{\Delta_{A*}(X)}$, hence an isomorphism. Consequently, $\Delta_X$ is $P$-cocartesian.

$(ii. \Rightarrow i.)\ $
Let $Z$ be an object of $F(A\times A)$. Then, since both $\Delta_Z$ and $\hat\pi_1\tm\,\hat\pi_2$ are cocartesian, also their composite $\langle \hat\pi_1,\hat\pi_2 \rangle=\eta_{Z}^{A,A}$ is. But the latter is also $P$-vertical, hence an isomorphism.
\[
\xymatrix@C=10ex{
Z\ar[r]_{\Delta_Z}\ar@/^4ex/[rr]^{\eta_{Z}^{A,A}}\ar@{.}[d]
&Z\tm Z\ar[r]_-{\hat\pi_1\tm\,\hat\pi_2}\ar@{.}[d]
&\pi_{1*}(Z)\tm \pi_{2*}(Z)\ar@{.}[d]
\\
A^2\ar[r]^{\Delta_{A^2}}\ar@/_4ex/[rr]_{1_{A^2}}
&A^2\tm A^2\ar[r]^-{\pi_1\tm\,\pi_2}
&A^2
}
\]
\end{proof}

Let $\cB$ be a category with finite products. Recall that a group object in $\cB$ is a monoid object $(A, m, e)$ endowed with an arrow $i\colon A\to A$ such that the following diagram commutes:
\begin{equation} \label{diag:group_inverses}
\begin{aligned}
\xymatrix@C=10ex{
A\ar[d]_{\tau_A}\ar[r]^-{\langle 1, i\rangle}
&A\times A\ar[d]^{m}
\\
I_\cB\ar[r]_{e}
&A}
\end{aligned}
\end{equation}

\begin{Definition} \label{def:gps}
A lax monoidal pseudofunctor $F\colon(\cB,\times,I_B)\to(\CAT,\times,\cI)$ is called \emph{groupal} when it sends internal groups in $\cB$ to 2-groups, namely, it determines the pseudofunctor $\CG(F)$ that makes the diagram
\begin{equation} \label{diag:CG}
\begin{aligned}
\xymatrix@C=16ex{
\Gp(\cB)\ar[d]_U\ar[r]^{\CG(F)}
&\CG\ar[d]^U
\\
\cB\ar[r]_{F}
&\CAT
}
\end{aligned}
\end{equation}
commute, where the vertical arrows are the forgetful functors.
\end{Definition}

The following theorem provides a full characterization of groupal pseudo\-functors.

\begin{Theorem} \label{thm:Gp(B)}
Let $P\colon \cX\to\cB$ be a cartesian monoidal opfibration, and $F\colon \cB\to\CAT$ its associated pseudofunctor. The following statements are equivalent.
\begin{itemize}
\item[i.] The equivalent conditions of Lemma~\ref{lemma:unit_terminal} hold, and condition $i.$ of Lemma \ref{lemma:unit_product} holds for every object $A$ in $\cB$ endowed with an internal group structure $(A,m,e,i)$.
\item[ii.] The lax monoidal pseudofunctor $F$ is groupal.
\end{itemize}
\end{Theorem}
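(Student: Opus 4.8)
The plan is to prove the two implications separately, translating everything through Lemmas~\ref{lemma:unit_terminal} and~\ref{lemma:unit_product}: condition $i.$ says precisely that the terminal maps $\tau_X$ are $P$-cocartesian for all $X$, and that the diagonals $\Delta_X$ are $P$-cocartesian for every $X$ lying over a group object. Recall also that, by Lemma~\ref{lemma:R} and Corollary~\ref{corollary:R}, $R^{A,A}(X,Y)=X\times Y$ is the product in $\cX$, that the tensor on a fibre over a monoid $(A,m,e)$ is $X\otimes Y=m_*(X\times Y)$, and that the unit is $e_*(I_\cX)$.

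For $(i.\Rightarrow ii.)$ the real content is to show that, for a group object $(A,m,e,i)$, the monoidal category $F(A)$ is a genuine 2-group, the hard part being the groupoid property (object-wise weak invertibility alone does not force a monoidal category to be a groupoid). I would establish this via the classical characterization that a monoidal category is a 2-group precisely when its shear functor $(X,Y)\mapsto(X,X\otimes Y)$ is an equivalence. The key observation is that this shear factors as $L^{A,A}\circ s_*\circ R^{A,A}$, where $s=\langle\pi_1,m\rangle\colon A\times A\to A\times A$ is the shear of the group object $A$: since $\pi_1 s=\pi_1$ and $\pi_2 s=m$, one gets $\pi_{1*}s_*(X\times Y)\cong X$ and $\pi_{2*}s_*(X\times Y)=m_*(X\times Y)=X\otimes Y$. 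As $A$ is a group, $s$ is an isomorphism, so $s_*$ is an equivalence; and I claim $L^{A,A}\dashv R^{A,A}$ is an \emph{adjoint equivalence}. Its unit $\eta^{A,A}$ is invertible by hypothesis (this is condition $i.$ of Lemma~\ref{lemma:unit_product} for the group object $A$), while its counit $\epsilon^{A,A}$ is invertible because the product projections $\pi_1\colon X\times Y\to X$ and $\pi_2\colon X\times Y\to Y$ are $P$-cocartesian: each factors as a product of cocartesian maps (using that $\tau_Y$, resp.\ $\tau_X$, is cocartesian by the terminal condition) followed by the isomorphism $X\times I_\cX\cong X$. Hence the shear is an equivalence and $F(A)$ is a 2-group. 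Finally, since $F$ is lax monoidal, each group homomorphism $f$ is sent to a strong monoidal functor $F(f)$, which is automatically a morphism of 2-groups; this upgrades $F$ to the pseudofunctor $\CG(F)$ of diagram~\eqref{diag:CG}, the coherence data being inherited from that of $F$.

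For $(ii.\Rightarrow i.)$ I would exploit only the groupoid half of the 2-group structure on fibres over group objects. For the unit condition, note that the terminal object $I_\cB$ is itself a group object (the trivial group), so $F(I_\cB)$ is a 2-group, in particular a groupoid; for $X$ in $F(I_\cB)$ the terminal map $\tau_X\colon X\to I_\cX$ is a \emph{vertical} morphism, hence an isomorphism, and Lemma~\ref{lemma:unit_terminal} yields that $\eta^1$ is an isomorphism. For the product condition, fix a group object $A$; then $A\times A$ is a group object as well, so $F(A\times A)$ is a groupoid. Given $X$ over $A$, let $\hat\Delta_A\colon X\to\Delta_{A*}(X)$ be the cocartesian lifting of $\Delta_A$ and let $\phi\colon\Delta_{A*}(X)\to X\times X$ be the unique vertical comparison with $\phi\circ\hat\Delta_A=\Delta_X$. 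Since $\phi$ is a morphism of the groupoid $F(A\times A)$, it is an isomorphism, so $\Delta_X$ is cocartesian; by Lemma~\ref{lemma:unit_product} this says $\eta^{A,A}$ is an isomorphism, as required.

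The main obstacle is the groupoid property in $(i.\Rightarrow ii.)$. Reducing it to invertibility of the shear functor is what makes it tractable, and the decisive point is the factorization $L^{A,A}\circ s_*\circ R^{A,A}$, which is where the genuine group structure of $A$ (invertibility of $s$) enters; the object-level weak inverses, obtainable directly by comparing the two cocartesian liftings of $e\tau_A=m\circ\langle 1,i\rangle\colon A\to A$ at $X$, would not by themselves suffice. A secondary subtlety, already visible above, is that both clauses of condition $i.$ are needed in tandem: the diagonal condition furnishes invertibility of the unit $\eta^{A,A}$, while the terminal condition is exactly what forces the counit $\epsilon^{A,A}$ to be invertible, and only together do they make $L^{A,A}\dashv R^{A,A}$ an adjoint equivalence.
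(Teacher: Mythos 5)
Your proof of $(i.\Rightarrow ii.)$ is correct but takes a genuinely different route from the paper's. The paper uses $(\ )^*=F(i)$ as an explicit inversion functor: invertibility of $\eta^1$ and $\eta^{A,A}$ lets it paste together natural isomorphisms $\gamma_X\colon E\to X\otimes X^*$ and $\delta'_X\colon X^*\otimes X\to E$, which are then adjusted to satisfy the triangle identities, and the argument of \cite[Proposition~7.2]{BL2004} is invoked to conclude that every morphism of $F(A)$ is invertible. You instead reduce everything to the shear functor; your factorization of $(X,Y)\mapsto(X,X\otimes Y)$ as $L^{A,A}\circ s_*\circ R^{A,A}$ is sound (the comparison isomorphisms are the pseudofunctoriality constraints together with the counit component $\epsilon_1$), your derivation of the invertibility of $\epsilon^{A,A}$ from the terminal condition is correct (projections are composites of products of cocartesian maps with isomorphisms), and the group structure of $A$ enters only through invertibility of $s=\langle\pi_1,m\rangle$, exactly mirroring the $1$-dimensional fact that a monoid is a group iff its shear is bijective. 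This is arguably cleaner than the paper's proof: no triangle identities, no explicit inverse formula, and no direct use of $F(i)$. The one point you should not leave implicit is the characterization ``shear an equivalence $\Rightarrow$ 2-group'': its delicate half is precisely the groupoid property that you rightly identify as the crux, so it needs a proof or a precise reference. It is true, and provable in a few lines: full faithfulness of the shear means that for each $f\colon X\to X'$ the map $g\mapsto f\otimes g=(f\otimes 1_{Y'})\circ(1_X\otimes g)$ is bijective; since $X\otimes-$ is fully faithful and (by essential surjectivity of the shear) essentially surjective, postcomposition with $f\otimes 1_{Y'}$ is then bijective on $\hom(W,-)$ for every $W$, so $f\otimes 1_{Y'}$ is invertible by Yoneda, and taking $Y'=I$ and conjugating by the right unitor makes $f$ itself invertible.

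There is, however, a genuine (though easily repaired) misstep in your $(ii.\Rightarrow i.)$. After showing that $\Delta_X$ is cocartesian for every $X$ lying over $A$, you conclude ``by Lemma~\ref{lemma:unit_product} this says $\eta^{A,A}$ is an isomorphism.'' That inference is not available: Lemma~\ref{lemma:unit_product} is a global equivalence (all $A$ in $\cB$ versus all $X$ in $\cX$), and its proof of $(ii.\Rightarrow i.)$, specialized to your fixed $A$, writes $\eta^{A,A}_Z=(\hat\pi_1\times\hat\pi_2)\circ\Delta_Z$ for $Z$ an object of $F(A\times A)$, so it requires cocartesianness of diagonals of objects lying over $A\times A$ -- which is not what you established. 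The fix is either to run your own groupoid argument one level up (for $Z$ over the group object $A\times A$, using that $F((A\times A)\times(A\times A))$ is a groupoid), or, much more directly -- and this is what the paper does -- to observe that the components of $\eta^{A,A}$ are themselves vertical morphisms of the fibre $F(A\times A)$, which is a groupoid because $A\times A$ is a group object, hence isomorphisms, with no mention of diagonals at all. The unit half of your argument ($\eta^1_X=\tau_X$ is a vertical morphism of the groupoid $F(I_\cB)$) is fine as written.
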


\begin{proof}
$(i.\Rightarrow ii.)\ $ By the equivalent conditions of Theorem~\ref{theorem:adj_make_prod}, $F$ is lax monoidal with respect to the cartesian monoidal structure of $\cB$. As we recalled at the end of Section~\ref{sec:preliminaries}, such an $F$ endows $F(A)$ with a  tensor product given by $\otimes=F(m)\circ R^{A,A}$, and a unit object $E=(F(e)\circ R^1)(\star)$.

We will show that the functor $(\ \, )^*=F(i)\colon F(A)\to F(A)$ provides pseudoinverses for the monoidal category $(F(A),\otimes,E)$. Let us consider the following diagram:
\[
\xymatrix@C=9ex@R=9ex{
F(A)\ar[rr]^-{\Delta_{F(A)}}\ar[dr]_{F(\Delta_A)}\ar[d]_{F(\tau_A)}
&\ar@{}[d]|{\cong}
&F(A)\tm F(A)\ar@/^2ex/[r]^-{1\tm F(i)}\ar@/_2ex/[r]_-{F(1)\tm F(i)}\ar@{}[d]|(.5){\cong}|(.4){L^{(1,i)}}\ar@{}[r]|{\cong}
&F(A)\tm F(A)\ar[d]^{R^{A,A}}\ar@{}|{}="y"
\\
F(I_\cB)\ar[d]_{L^1}\ar[dr]^-1
&F(A\tm A)\ar[ur]_-{L^{A,A}}\ar[r]_-{F(1\tm i)}\ar@{}[dr]|{\cong}
&F(A\tm A)\ar[ur]^(.3){L^{A,A}}\ar[r]_-1="x"
&F(A\tm A)\ar[d]^{F(m)}
\\
\cI\ar[r]_{R^1}\ar@{}[ur]|(.2){}="e1"|(.4){}="e2"
&F(I_\cB)\ar[rr]_{F(e)}
&&F(A)
\ar@{=>}"e1";"e2"_{(\eta^1)^{-1}}
\ar@{}"y";"x"|(.35){}="ee1"|(.65){}="ee2"
\ar@{=>}"ee2";"ee1"_{\eta^{A,A}}
}
\]
where, since $L^{A,A}=\langle F(\pi_1),F(\pi_2) \rangle$, all unlabelled natural isomorphisms are given by the pseudofunctoriality of $F$.
Then, since $\eta^1$ and $\eta^{A,A}$ are natural isomorphisms by hypothesis, the pasting of the diagram gives a natural isomorphism $\gamma$ with components $\gamma_X\colon E \rightarrow  X\otimes X^*$, for  $X$ an object of $F(A)$.
A similar construction, which uses $L^{(i,1)}$ instead of $L^{(1,i)}$, leads to the definition of a natural isomorphism $\delta'$ with components $\delta'_X\colon X^*\otimes X\rightarrow E$. Now, it is a standard well-known fact that, given $\gamma_X,\delta'_X$ as above, it is possible to define  a natural isomorphism $\delta$ such that the pair $(\gamma_X,\delta_X)$ satisfies the usual triangle identities:
\[
(1_X\otimes \delta_X)\circ\alpha_{X,X^*,X}\circ(\gamma_X\otimes 1_X)=\rho_X^{-1}\circ\lambda_X
\]
\[
(\delta_X\otimes1_{X^*})\circ\alpha_{X^*,X,X^*}\circ (1_{X^*}\otimes \gamma_X)=\lambda^{-1}_{X^*}\circ \rho_{X^*}
\]
(see for instance the proof of \cite[Theorem~5.1]{BL2004}).
Moreover, every arrow $f\colon X\to Y$ in $F(A)$ is an isomorphism, with inverse given below:
\[
\xymatrix@C=8ex{
Y\ar[r]^-{\lambda_Y^{-1}}
&E\otimes Y\ar[r]^-{\gamma_X\otimes 1}
& (X\otimes X^*)\otimes Y\ar[r]^-{(1\otimes f^*)\otimes 1}
&(X\otimes Y^*)\otimes Y\ar[dll]^-{\ \ \alpha_{X,Y^*,Y}}
\\
&X\otimes (Y^*\otimes Y)\ar[r]_-{1\otimes \delta_Y}
&X\otimes E\ar[r]_-{\rho_X}
&X
}
\]
(see for instance the proof of \cite[Proposition~7.2]{BL2004}).
Hence $(F(A), \otimes, E, (\ )^*)$ is a 2-group. Moreover, since for a group morphism $f\colon A\to B$, $F(f)$ is monoidal, then $F$ determines a pseudofunctor $\CG(F)$ such that diagram \eqref{diag:CG} commutes.

$(ii.\Rightarrow i.)\ $ Since $\cB$ has finite products, the category of groups in $\cB$ has finite products as well.
For the terminal group  $I_\cB$, the monoidal category $F(I_\cB)$ is actually  a 2-group, hence a groupoid; as a consequence, $\eta^1$ must be an isomorphism.
Similarly, for a group $A$ in $\cB$, $F(A\times A)$ is also a 2-group, hence a groupoid; as a consequence, $\eta^{A,A}$ must be a natural isomorphism.
\end{proof}

\begin{Proposition} \label{prop:abelian}
A groupal pseudofunctor $F\colon(\cB,\times,I_B)\to(\CAT,\times,\cI)$ lifts to a pseudofunctor from  $\Ab(\cB)$ to $\SCG$.
\end{Proposition}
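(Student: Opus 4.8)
The plan is to observe that an object $A$ of $\Ab(\cB)$ carries simultaneously the structure of a commutative monoid and of a group object in $\cB$, and to show that the two pieces of structure which $F$ induces on the fibre $F(A)$ from these datum — a symmetric monoidal structure on the one hand, and a 2-group structure on the other — are built on one and the same underlying monoidal category and hence assemble into a symmetric 2-group. First I would recall that, by the equivalent conditions of Theorem~\ref{theorem:adj_make_prod}, a groupal $F$ is in particular lax symmetric monoidal, so that its tensor product on $F(A)$ is $\otimes=F(m)\circ R^{A,A}$ with unit object $E=(F(e)\circ R^1)(\star)$, \emph{independently} of whether we regard $A$ as a monoid, a commutative monoid, or a group. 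Since $A$ is commutative, Remark~\ref{rmk:sym} endows $(F(A),\otimes,E)$ with a braiding $\tau$, making it a symmetric monoidal category; and since $A$ is a group, the groupal hypothesis (Definition~\ref{def:gps}) makes $(F(A),\otimes,E)$ a 2-group, in which the functor $(\ )^*=F(i)$ provides weak inverses through the natural isomorphisms $\gamma,\delta$ exhibited in the proof of Theorem~\ref{thm:Gp(B)}. Because both structures live on the very same tensor, their superposition is by definition a symmetric 2-group: a symmetric monoidal groupoid with weakly invertible objects. Thus $F$ sends objects of $\Ab(\cB)$ to objects of $\SCG$.

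Next I would treat morphisms. For $f\colon A\to B$ in $\Ab(\cB)$, the functor $F(f)\colon F(A)\to F(B)$ is a strong symmetric monoidal functor, as recorded at the end of Section~\ref{sec:preliminaries}. Since both $F(A)$ and $F(B)$ are 2-groups, a standard fact shows that any strong monoidal functor between 2-groups automatically preserves weak inverses up to coherent isomorphism; hence $F(f)$ is a morphism of symmetric 2-groups. Together with the coherence isomorphisms $\phi,\phi^1$ of the pseudofunctor $F$, which restrict to the required structural 2-cells, this data assembles into a pseudofunctor $\SCG(F)\colon \Ab(\cB)\to\SCG$ fitting, over the forgetful functors, into a commutative square analogous to \eqref{diag:CG}.

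The only genuinely substantive point — and where I would therefore concentrate the verification — is the compatibility of the symmetric structure with the weak-inverse structure, namely checking that the braiding $\tau$ of Remark~\ref{rmk:sym} and the duality $(\gamma,\delta)$ from the proof of Theorem~\ref{thm:Gp(B)} jointly satisfy the coherence axioms of a symmetric 2-group. This becomes immediate once one notes that both are constructed from the single lax symmetric monoidal structure on $F$ and from the same underlying tensor $\otimes=F(m)\circ R^{A,A}$, so that no new construction is introduced that could clash with the already-verified coherences. The remaining coherence conditions and the pseudofunctoriality of $\SCG(F)$ are then a routine transcription of those established for the groupal structure in Theorem~\ref{thm:Gp(B)}; the work is essentially bookkeeping, superimposing Remark~\ref{rmk:sym} on that proof rather than producing anything new.
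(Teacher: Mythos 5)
Your proposal is correct and follows essentially the same route as the paper's (much terser) proof: observe that an abelian group in $\cB$ is both a commutative monoid and a group, so the symmetric monoidal structure from Theorem~\ref{theorem:adj_make_prod} and Remark~\ref{rmk:sym} and the 2-group structure from the groupal hypothesis live on the same tensor $\otimes=F(m)\circ R^{A,A}$, yielding a symmetric 2-group. Your additional verifications (that $F(f)$ preserves weak inverses, and the compatibility of braiding with duals) are correct but are exactly the routine points the paper leaves implicit.
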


\begin{proof}
Just observe that abelian groups are commutative monoids, so that the lax monoidal pseudofunctor $F$ sends them to 2-groups, which are symmetric monoidal thanks to Theorem \ref{theorem:adj_make_prod} (see also Remark \ref{rmk:sym}), namely to symmetric 2-groups.
\end{proof}

\begin{Remark}
In \cite[Theorem 9]{Bourn99} Bourn proved that, for an opfibration $P\colon \cX\to\cB$ which preserves binary products and terminal object and such that cocartesian maps are stable under product, if terminal maps and diagonals are cocartesian, then an abelian group structure on an object of $\cB$ induces a closed monoidal structure on its fibre.  This result can be obtained as a consequence of our Proposition~\ref{prop:abelian}. Actually, it suffices to assume the hypotheses concerning terminal and diagonal maps only for the objects of \cX\ whose image supports a group structure. Moreover, from our perspective, one also sees that the fibre over any group object is a groupoid.
\end{Remark}

\begin{Corollary} \label{cor:groupoidal_fibres}
Let $P\colon \cX\to\cB$ be a cartesian monoidal opfibration whose fibres are groupoids, and $F\colon \cB\to\CAT$ its associated pseudofunctor. Then $F$ lifts to pseudofunctors from $\Gp(\cB)$  to $\CG$ and from $\Ab(\cB)$ to $\SCG$.
\end{Corollary}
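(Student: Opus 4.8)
The plan is to reduce the statement to the characterization of groupal pseudofunctors provided by Theorem~\ref{thm:Gp(B)}, together with the abelian refinement of Proposition~\ref{prop:abelian}. Since $P$ is assumed to be a cartesian monoidal opfibration, the standing hypotheses of Theorem~\ref{thm:Gp(B)} are already in force, so it suffices to verify the two conditions in its point $i.$: that the equivalent conditions of Lemma~\ref{lemma:unit_terminal} hold, and that condition $i.$ of Lemma~\ref{lemma:unit_product} holds for every object of $\cB$ carrying a group structure. By Lemma~\ref{lemma:unit_terminal} the first reduces to showing that every terminal map $\tau_X\colon X\to I_\cX$ is $P$-cocartesian, and by Lemma~\ref{lemma:unit_product} the second reduces to showing that every diagonal $\Delta_X\colon X\to X\times X$ is $P$-cocartesian.

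The key observation is that, when the fibres of $P$ are groupoids, \emph{every} morphism of $\cX$ is $P$-cocartesian. Indeed, given $f\colon X\to Y$ with $P(f)=u\colon A\to B$, the opfibration yields a cocartesian lifting $\hat u\colon X\to u_*(X)$, and by its universal property $f$ factors uniquely as $f=v\circ\hat u$ with $v\colon u_*(X)\to Y$ a $P$-vertical map over $B$. Since the fibre of $P$ over $B$ is a groupoid, $v$ is an isomorphism; as isomorphisms are $P$-cocartesian and cocartesian maps are closed under composition, $f=v\circ\hat u$ is cocartesian.

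Applying this observation to the terminal maps $\tau_X$ and to the diagonals $\Delta_X$ — which are, after all, simply morphisms of $\cX$ — shows at once that condition $iii.$ of Lemma~\ref{lemma:unit_terminal} holds and that condition $ii.$ of Lemma~\ref{lemma:unit_product} holds for every object of $\cX$. By those lemmas, the equivalent conditions of Lemma~\ref{lemma:unit_terminal} hold, and condition $i.$ of Lemma~\ref{lemma:unit_product} holds for every $A$ in $\cB$, hence a fortiori for every group object. This is precisely point $i.$ of Theorem~\ref{thm:Gp(B)}, so $F$ is groupal and lifts to a pseudofunctor $\Gp(\cB)\to\CG$. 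Finally, Proposition~\ref{prop:abelian} promotes this to a lift $\Ab(\cB)\to\SCG$ over abelian group objects, completing the argument.

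As for the main obstacle: there is essentially none beyond the clean factorization argument of the second paragraph, which is the only genuinely new input. The one point deserving care is to confirm that isomorphisms are $P$-cocartesian and that cocartesian maps compose — both standard properties of opfibrations; once granted, the verification of the hypotheses of Theorem~\ref{thm:Gp(B)} is immediate, and the rest is bookkeeping across the cited results.
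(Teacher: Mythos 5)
Your proof is correct, and it ends exactly where the paper's does --- point $i.$ of Theorem~\ref{thm:Gp(B)} followed by Proposition~\ref{prop:abelian} --- but you verify that point from the opposite side of the lemma equivalences. The paper's proof is a one-liner: the units $\eta^1$ and $\eta^{A,A}$ are $P$-vertical morphisms, i.e.\ they live in fibres of $P$, so groupoidal fibres force them to be isomorphisms outright, and conditions $i.$ of Lemma~\ref{lemma:unit_terminal} and Lemma~\ref{lemma:unit_product} hold with no further work. You instead establish the cocartesian conditions (condition $iii.$ of Lemma~\ref{lemma:unit_terminal} and condition $ii.$ of Lemma~\ref{lemma:unit_product}) by proving the stronger general fact that in an opfibration with groupoidal fibres \emph{every} morphism of $\cX$ is $P$-cocartesian, via the factorization $f=v\circ\hat u$ into a cocartesian lifting followed by a vertical --- hence invertible, hence cocartesian --- map. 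That observation is sound: the two standard facts you invoke (vertical isomorphisms are cocartesian, and composites of cocartesian maps in an opfibration are cocartesian) are exactly as routine as you claim, and the paper itself silently uses the composition fact in the proof of Lemma~\ref{lemma:unit_terminal}, $(ii.\Rightarrow iii.)$. What your route buys is a clean, reusable characterization (groupoidal fibres amount to all morphisms being cocartesian) and a direct structural reason why terminal and diagonal maps are cocartesian; what it costs is the extra factorization step plus a walk back through the lemmas' equivalences, whereas the paper obtains the unit conditions --- the ones Theorem~\ref{thm:Gp(B)} literally asks for --- in a single sentence.
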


\begin{proof}
Since the fibres of $P$ are groupoids, the units of the adjuntions of Lemma~\ref{lemma:unit_terminal} and Lemma~\ref{lemma:unit_product} are isomorphisms, and we can apply Theorem~\ref{thm:Gp(B)} and Proposition \ref{prop:abelian}.
\end{proof}

\begin{Theorem} \label{thm:additive}
Let the category $\cB$ be  additive, $P\colon \cX\to\cB$ be a cartesian monoidal opfibration, and $F\colon \cB\to\CAT$ its associated pseudofunctor.
Suppose moreover that, for every $X$ in $\cX$, the terminal map $\tau_X$ and the diagonal map $\Delta_X$ are $P$-cocartesian. Then, $F$ factors through $\SCG$.
\end{Theorem}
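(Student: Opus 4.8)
The plan is to reduce the statement to the group-theoretic characterisation already obtained, using the fact that additivity erases any difference between $\cB$ and its category of internal abelian group objects. The crucial structural remark is that in an additive category every object $A$ carries a \emph{unique} internal abelian group structure: the multiplication is the codiagonal attached to the biproduct $A\times A\cong A\oplus A$, the unit is the zero map, and the inverse is $-1_A$; moreover every arrow of $\cB$ is automatically a homomorphism for these structures (this is the usual Eckmann--Hilton argument). Consequently the forgetful functor $U\colon\Ab(\cB)\to\cB$ is not merely an equivalence but an isomorphism of categories, bijective on objects and on arrows. I would begin by recording this.

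Next I would check the hypotheses of Theorem~\ref{thm:Gp(B)}. Since $P$ is a cartesian monoidal opfibration, Theorem~\ref{theorem:adj_make_prod} already equips $F$ with its lax symmetric monoidal structure $(F,R,R^1,\dots)$, so that speaking of $F$ being \emph{groupal} makes sense. By Lemma~\ref{lemma:unit_terminal}, the assumption that $\tau_X$ is $P$-cocartesian for every $X$ in $\cX$ is exactly the statement that $\eta^1$ is a natural isomorphism; by Lemma~\ref{lemma:unit_product}, the assumption that $\Delta_X$ is $P$-cocartesian for every $X$ is exactly the statement that $\eta^{A,A}$ is a natural isomorphism for every object $A$ of $\cB$. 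In particular these conditions hold over every $A$ that supports a group structure, so condition $i.$ of Theorem~\ref{thm:Gp(B)} is verified and $F$ is groupal. I would stress that the restriction to group objects in that theorem costs nothing here, precisely because every object of an additive category is already an abelian group object.

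Finally I would invoke Proposition~\ref{prop:abelian}: a groupal $F$ lifts to a pseudofunctor $\widetilde F\colon\Ab(\cB)\to\SCG$ compatible with the forgetful functors. Composing $\widetilde F$ with the inverse of the isomorphism $U\colon\Ab(\cB)\to\cB$ produces a pseudofunctor $\cB\to\SCG$ whose composite with the forgetful functor $\SCG\to\CAT$ returns $F$, which is the asserted factorisation.

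I do not anticipate any serious obstacle, since all the analytic work is carried by the earlier lemmas, by Theorem~\ref{thm:Gp(B)} and by Proposition~\ref{prop:abelian}. The one point demanding care is the passage from a \emph{lift along} $U\colon\Ab(\cB)\to\cB$ to a genuine \emph{factorisation through} $\SCG$ over $\cB$: this hinges on $U$ being an honest isomorphism of categories in the additive setting, which is what lets me transport $\widetilde F$ back along $U^{-1}$ strictly.
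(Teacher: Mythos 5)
Your proposal is correct and follows essentially the same route as the paper's own proof: additivity makes $U\colon\Ab(\cB)\to\cB$ an isomorphism of categories, the hypotheses on terminal and diagonal maps give condition \emph{i.} of Theorem \ref{thm:Gp(B)} via Lemmas \ref{lemma:unit_terminal} and \ref{lemma:unit_product}, and Proposition \ref{prop:abelian} then supplies the factorisation through \SCG. The only difference is that you make explicit some details (the Eckmann--Hilton argument and the transport of the lift along $U^{-1}$) that the paper leaves implicit.
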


\begin{proof}
When $\cB$ is additive, every object of $\cB$ has a unique abelian group structure, and all morphisms of $\cB$ preserve such structures. As a consequence, the forgetful functor $U\colon \Ab(\cB)\to \cB$ is an isomorphism, so that the conditions on terminal and diagonal maps apply to every object in \cX, and we can apply Proposition~\ref{prop:abelian}.
\end{proof}

From Lemma \ref{lemma:unit_terminal}, Lemma \ref{lemma:unit_product} and Theorem \ref{thm:additive}, we finally get one of the main results of this paper.

\begin{Theorem} \label{thm:final}
Let the category $\cB$ be  additive, $P\colon \cX\to\cB$ be a cartesian monoidal opfibration, and $F\colon \cB\to\CAT$ its associated pseudofunctor. Then, $F$ factors through $\SCG$ if and only if $P$ has groupoidal fibres.
\end{Theorem}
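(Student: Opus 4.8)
The plan is to prove the two implications separately; the forward direction will be essentially definitional, while the converse will assemble the machinery developed earlier in the section.

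For the implication that a factorization forces groupoidal fibres, I would argue directly. Suppose $F$ factors through $\SCG$, i.e.\ there is a pseudofunctor $\bar F\colon\cB\to\SCG$ with $U\bar F=F$, where $U\colon\SCG\to\CAT$ is the forgetful functor. For every object $A$ of $\cB$, the value $\bar F(A)$ is by definition a symmetric $2$-group, hence in particular a groupoid, every morphism being invertible. Since $U$ merely forgets the symmetric monoidal structure and the inversion, $F(A)=U\bar F(A)$ is that same underlying groupoid. But $F(A)$ is precisely the fibre of $P$ over $A$ (via the Grothendieck construction recalled in the Preliminaries), so every fibre of $P$ is a groupoid. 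This direction uses neither additivity nor any of the earlier lemmas beyond the identification of the fibres with the values of $F$.

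For the converse I would start from the key observation that the relevant adjunction units are fibrewise ($P$-vertical) maps, so that groupoidal fibres automatically make them invertible. Concretely, the component $\eta^1_X\colon X\to R^1L^1(X)=I_\cX$ is a morphism of the fibre $F(I_\cB)$ (it is the terminal map of $X$ in that fibre, as in the proof of Lemma \ref{lemma:R1}), and the component $\eta^{A,A}_Z=\langle\hat\pi_1,\hat\pi_2\rangle$ is a morphism of the fibre $F(A\times A)$ (as computed in the proof of Lemma \ref{lemma:R}). Groupoidal fibres thus force $\eta^1$ and every $\eta^{A,A}$ to be natural isomorphisms. Then I would invoke the equivalence $i.\Leftrightarrow iii.$ of Lemma \ref{lemma:unit_terminal} to turn the invertibility of $\eta^1$ into the statement that every terminal map $\tau_X$ is $P$-cocartesian, and the equivalence $i.\Leftrightarrow ii.$ of Lemma \ref{lemma:unit_product}, applied for all $A$, to turn the invertibility of all $\eta^{A,A}$ into the statement that every diagonal map $\Delta_X$ is $P$-cocartesian. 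With both hypotheses of Theorem \ref{thm:additive} now verified for every object of $\cX$, that theorem immediately delivers the desired factorization of $F$ through $\SCG$.

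I do not expect a genuine obstacle. The forward direction is just an unwinding of the definition of a $2$-group as a (monoidal) groupoid, and the converse is bookkeeping that chains together three already-established results. The only point needing a little care is the explicit identification of $\eta^1$ and $\eta^{A,A}$ as morphisms living inside individual fibres, which is exactly what licenses the step ``groupoidal fibre $\Rightarrow$ vertical unit is an isomorphism''; once this is in place, additivity enters only to guarantee (through the isomorphism $\Ab(\cB)\cong\cB$ used in Theorem \ref{thm:additive}) that the argument applies uniformly to every object, rather than only to those carrying a group structure. As a shortcut for the converse, one could instead cite Corollary \ref{cor:groupoidal_fibres} directly and read its lift $\Ab(\cB)\to\SCG$ as a factorization of $F$ through $\SCG$ under the isomorphism $\Ab(\cB)\cong\cB$.
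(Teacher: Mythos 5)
Your proof is correct and follows essentially the same route as the paper: the forward direction is the immediate observation that values of a pseudofunctor landing in $\SCG$ are groupoids and these are precisely the fibres of $P$, while the converse chains the verticality (hence invertibility, in groupoidal fibres) of the units $\eta^1$ and $\eta^{A,A}$ through Lemma \ref{lemma:unit_terminal} and Lemma \ref{lemma:unit_product} into the hypotheses of Theorem \ref{thm:additive} --- exactly the ingredients the paper cites. Your closing shortcut via Corollary \ref{cor:groupoidal_fibres} together with $\Ab(\cB)\cong\cB$ is likewise sound and is just a repackaging of the same argument.
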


\section{Examples}

\subsection{Abelian extensions of groups} \label{sec:H^2}

Let us fix a group $C$ and consider the category $\ABEXT(C)$ of group extensions of $C$ with abelian kernel, and the abelian category $\Mod(C)$ of $C$-modules.
Moreover, let us consider the functor $P\colon \ABEXT(C)\to \Mod(C)$ which associates with every abelian extension the induced action of $C$ on the kernel:
\begin{equation} \label{diag:opext_to_mod}
P\colon\quad
\begin{aligned}
\xymatrix{B\ar[r]^{k}\ar[d]_{\phi}&E\ar[r]^{f} \ar[d]^{\psi}&C\ar[d]^{1}
\\
B'\ar[r]_{k'}&E'\ar[r]_{f'} &C}
\end{aligned}
\qquad\mapsto\qquad
\begin{aligned}
\xymatrix{C\times B\ar[r]^-{\xi}\ar[d]_{1\times \phi}&B\ar[d]^{\phi}
\\
C\times B'\ar[r]_-{\xi'}&B'}
\end{aligned}
\end{equation}
In fact, $P$ is an opfibration. With notation as above, a cocartesian lifting of $\phi$ at $E=(f,k)$ is obtained by the so-called \emph{pushforward construction}:
\begin{equation*}
\begin{aligned}
\xymatrix{B\ar[r]^{k}\ar[d]_{\phi}&E\ar[r]^{f} \ar[d] &C\ar[d]^{1}
\\
B'\ar[r]&\phi_{*}E\ar[r]&C}
\end{aligned}
\end{equation*}
where $\phi_{*}E$ is a quotient of the semidirect product with the action of $E$ on $B'$ induced by $f$ (this is classical, see for instance \cite{Homology}).
It is easy to check that $\ABEXT(C)$ has terminal object and binary products, given by pullbacks over $C$; moreover $P$ preserves them, and  products of cocartesian maps are still cocartesian. Therefore we can apply Proposition \ref{prop:R} and Theorem \ref{theorem:adj_make_prod} in order to describe the lax monoidal structure of the pseudofunctor associated with $P$:
\[
\OPEXT(C,-)\colon \Mod(C)\to \CAT.
\]
Our notation is consistent with literature (e.g.\ \cite{Vitale03}), since the fibre of $P$ over a $C$-module $(B,\xi)$ is the groupoid $\OPEXT(C,B,\xi)$  of abelian extensions of $C$, with kernel $B$ and induced action $\xi$.
We can apply Theorem \ref{thm:final}, so that each fibre of $P$ gets a symmetric 2-group structure, and change of base functors are symmetric monoidal. Actually, the monoidal structure is  obtained by \eqref{diag:F(M)tensor} and \eqref{diag:F(M)unit} as follows: given a pair of extensions $(E,E')\in \OPEXT(C,B,\xi)\times \OPEXT(C,B,\xi)$
\begin{itemize}
\item the right adjoint $R^{(B,\xi),(B,\xi)}$ yields the product of $E$ and $E' $ over $C$,
\item the codomain of the cocartesian lifting at $R^{(B,\xi),(B,\xi)}$ of the group operation $m\colon B\times B\to B$ gives the tensor product $E\otimes E'$.
\[
\xymatrix@C=16ex{
B\ar[r]&E\otimes E'\ar[r] &C\\
B\times B\ar[d]_{1} \ar[u]^m \ar[r] &R^{(B,\xi),(B,\xi)}(E,E')\ar[u]\ar[d]\ar[r] \ar[r]\ar@{}[dr]|{\text{pullback}}\ar@{}[ul]|{\text{push forward}}& C\ar[d]^{\Delta_C}\ar[u]_{1}\\
B\times B\ar[r]_{k\times k'} & E\times E'\ar[r]_{f\times f'}&C\times C}
\]
\end{itemize}

Once one considers connected components, the tensor operation induces precisely the Baer sum of isomorphism classes of extensions, which makes $\pi_0(\OPEXT(C,B,\xi))$ a group (isomorphic to $H^2(C,B,\xi)$). On the other hand, one can compute the abelian group $\pi_1(\OPEXT(C,B,\xi))$ of the automorphisms of the identity object:
\[
\xymatrix{
B\ar[r]&B\rtimes_{\xi}C\ar[r]&C}\,.
\]
According to Proposition IV.2.1 in \cite{Homology}, this can be interpreted as the group of 1-cocycles $Z^1(C,B,\xi)$, as already noticed in \cite{categorical_ot}.
\begin{Remark}
Section \ref{sec:H^2} can be easily adapted to the case of semi-abelian categories (see \cite{pf} and also Section 4.2 of \cite{categorical_ot}, where the general case of Bourn's direction functor is considered). The analogous result holds for associative unitary algebras (see Section 4.3 of \cite{categorical_ot}).
\end{Remark}

\subsection{From left monoid actions to torsors} \label{sec:torsors}

Let us briefly recall the category $\mathsf{LMSet}$ of monoid left actions. An object of  $\mathsf{LMSet}$ is a set $X$ endowed with a left action $\phi\colon M\times X\to X$ of a monoid $M$, where  $m\in M$ operates on  $x\in X$ by the law $m\cdot x = \phi(m,x)$.
Given two such objects $(M,X,\phi)$ and $(N,Y,\psi)$, an arrow between them is a pair $(f,f_0)$ where  $f\colon M\to N$ is a monoid homomorphism and $f_0 \colon X\to Y$ is a function  such that the following diagram commutes in $\Set$:
\[
\xymatrix{
M\times X\ar[r]^-{\phi}\ar[d]_{f\times f_0}
&X\ar[d]^{f_0}
\\
N\times Y\ar[r]_-{\psi}
&Y
}
\]
Such a pair $(f,f_0)$ is called \emph{equivariant.}

In the following proposition, we will use a generalization of the classical tensor product of (bi)modules (see for instance \cite[VII.4, Exercise 6]{Categories}). For a right $M$-set X and a left $M$-set $Y$, their \emph{contracted product} $X\overset{M}{\wedge}{Y}$ is the quotient of $X\times Y$ determined by the equivalence relation generated by $(x\cdot m,y)\sim(x,m\cdot y)$. The term contracted product and the wedge notation are borrowed from \cite{Breen}, where the special case of bitorsors is dealt with.

\begin{Proposition} \label{prop:62}
The forgetful functor $V\colon \mathsf{LMSet}\to \Mon$ is a cartesian\\
monoidal opfibration.
\end{Proposition}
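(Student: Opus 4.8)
The plan is to verify condition \emph{i} of Theorem~\ref{theorem:adj_make_prod}: I must exhibit $V$ as an opfibration, show that $\mathsf{LMSet}$ has finite products strictly preserved by $V$, and check that products of cocartesian arrows are cocartesian. By Proposition~\ref{prop:Prop_3.2_MV} and Proposition~\ref{prop:R}, these three facts together are precisely what it means for $V$ to be a cartesian monoidal opfibration. I therefore proceed in three steps.

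First, I would show that $V$ is an opfibration by writing down the cocartesian liftings explicitly. Given $(M,X,\phi)$ and a monoid homomorphism $f\colon M\to N$, regard $N$ as an $(N,M)$-biset, acting on itself on the left and receiving a right $M$-action through $f$ by $n\cdot m=nf(m)$. The pushforward is then the induced action $f_*(X)=N\overset{M}{\wedge} X$, the contracted product, endowed with its evident left $N$-action, and the lifting is $\hat f=(f,\eta)$ with $\eta(x)=[1,x]$. Its cocartesianness is exactly the universal property of induction: any equivariant $(h,h_0)\colon(M,X,\phi)\to(P,Z,\chi)$ with $h=kf$ factors uniquely as $(k,k_0)\circ\hat f$, where $k_0[n,x]=k(n)\cdot h_0(x)$; both well-definedness (from the equivariance of $(h,h_0)$) and uniqueness (since $\eta$ generates $f_*(X)$ as an $N$-set) are immediate.

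Second, I would describe the finite products. The terminal object is the unique action of the trivial monoid on a one-point set, which $V$ sends to $I_\Mon$. The binary product of $(M,X,\phi)$ and $(N,Y,\psi)$ is $(M\times N,\,X\times Y)$ with the componentwise action $(m,n)\cdot(x,y)=(m\cdot x,\,n\cdot y)$, the projections being the obvious equivariant pairs, and its universal property is checked coordinatewise. Since $V$ carries these to the corresponding products in $\Mon$ on the nose, it strictly preserves finite products (cf.\ the Remark following Corollary~\ref{corollary:R}).

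The substance of the argument, and the step I expect to be the main obstacle, is the third condition. By Lemma~\ref{lemma:mate} and the discussion around diagram~\eqref{diag:mate2}, products of cocartesian arrows over $f\colon M\to N$ and $g\colon P\to Q$ are cocartesian exactly when the canonical comparison
\[
(N\times Q)\overset{M\times P}{\wedge}(X\times Z)\longrightarrow (N\overset{M}{\wedge}X)\times(Q\overset{P}{\wedge}Z),\qquad [(n,q),(x,z)]\mapsto([n,x],[q,z]),
\]
is an isomorphism. The forward map is plainly a well-defined, surjective, $N\times Q$-equivariant map, so the real content is the well-definedness of the candidate inverse $([n,x],[q,z])\mapsto[(n,q),(x,z)]$. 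The key point is that each generating relation of the right-hand factors lifts to the product contracted product one factor at a time: since $g(1)=1$, acting by $(m,1)\in M\times P$ gives $((nf(m),q),(x,z))\sim((n,q),(mx,z))$, realizing the elementary relation $[nf(m),x]=[n,mx]$ of the first factor while freezing the second, and symmetrically for the second factor using $f(1)=1$. Hence the inverse is well-defined, the comparison is a bijection, and products of cocartesian arrows are cocartesian. With the three steps in hand, Theorem~\ref{theorem:adj_make_prod} yields that $V$ is a cartesian monoidal opfibration.
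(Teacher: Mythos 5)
Your proof is correct and follows essentially the same route as the paper's: cocartesian liftings via the contracted product $N\overset{M}{\wedge}X$ with $x\mapsto[1,x]$, componentwise finite products strictly preserved by $V$, and cocartesianness of products of cocartesian arrows via the bijection $(N\times Q)\overset{M\times P}{\wedge}(X\times Z)\cong (N\overset{M}{\wedge}X)\times(Q\overset{P}{\wedge}Z)$. Your one-factor-at-a-time well-definedness argument (freezing a coordinate using $f(1)=1$, $g(1)=1$) is exactly the content of the paper's displayed identification chain, just made slightly more explicit.
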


\begin{proof}
We only sketch the proof, which is based on well-known facts and constructions. Let $(M, X,\phi)$ be an object of $ \mathsf{LMSet}$, and $f\colon M\to N$ a homomorphism.  The product in $N$ and precomposition with $f$ make $N$ a right $M$-set. Therefore, we can define $f_!(X)$ as the contracted product $N\overset{M}{\wedge} X$.  In fact, $f_!(X)$ is a left $N$-set, with  action defined by $\overline n\cdot[n,x]=[\overline n n, x]$, square brackets denoting equivalence classes. Moreover, the assignment $x\to[1,x]$ is well defined and yields a function $\hat f\colon X\to f_!(X)$ such that $(f,\hat f)$ is an equivariant pair. Easy calculations show that $(f,\hat f)$ is $V$-cocartesian over $f$, and this makes $V$ an opfibration.

Concerning the  monoidal structure of the opfibration, it is clear that $V$ preserves products. Let us sketch how the product of cocartesian arrows is itself cocartesian. For objects $(M_i,X_i,\phi_i)$ and homomorphisms $f_i\colon M_i\to N_i$ as above, with $i=1,2$, the identifications
\[
[(n_1,n_2),(m_1\cdot x_1,m_2\cdot x_2)]=[(n_1,n_2), (m_1,m_2)\cdot(x_1,x_2)]=
\]
\[
=[(n_1,n_2)\, \,(f_1\times f_2)(m_1,m_2),(x_1,x_2)]=[ (n_1f_1(m_1)  ,n_2f_2(m_2))  ,(x_1,x_2)]
\]
show that the assignment
\[
([n_1,x_1],[n_2,x_2])\mapsto [(n_1,n_2),(x_1,x_2)]
\]
is well defined.
Moreover, it gives an equivariant isomorphism
\[(f_1)_!(X_1)\times (f_2)_!(X_2)\cong (f_1\times f_2)_!(X_1\times X_2)\,.\]
\end{proof}

Thus, $V$ translates into a lax symmetric  monoidal  pseudofunctor denoted by
\[
\ACT\colon \Mon\to \CAT
\]
with monoidal structure given as in Theorem \ref{theorem:adj_make_prod}.

Let us observe that Theorem \ref{thm:Gp(B)} cannot be applied to $\ACT$ in its full generality. We will soon deal with this matter, but first, let us briefly discuss the monoidal structure of the fibres.

By Proposition \ref{prop:62}, $\ACT$ lifts to a lax symmetric monoidal  pseudofunctor between internal pseudomonoids; indeed, pseudomonoids in the $1$-category of monoids are just  commutative monoids, and then, thanks to Remark \ref{rmk:sym}, we have:
\[
\ACT\colon \CMon\to \SMC.
\]

Let us fix a commutative monoid  $M$.  The tensor product in $\ACT(M)$ is obtained by the construction \eqref{diag:F(M)tensor} as follows: given a pair of left $M$-sets $X$ and $Y$,
\begin{itemize}
\item the right adjoint yields the product of $X$ and $Y$ in $\mathsf{LMSet}$,
\item the codomain of the cocartesian lifting of the monoid operation $p:M\times M\to M$ gives the tensor product $p_!(X\times Y)=X\otimes_M Y$.
\end{itemize}
The following straightforward result shows that it is indeed an instance of the contracted product already mentioned before.

\begin{Proposition} \label{prop:contracted}
The map  $X\overset{M}{\wedge} Y\to X\otimes_M Y$ given by the assignment
\[
[x,y]\mapsto [1,x,y]
\]
is an isomorphism of $M$-sets.
\end{Proposition}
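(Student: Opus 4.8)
The plan is to produce an explicit two-sided inverse and verify that both maps respect the defining quotient relations, with commutativity of $M$ doing all the real work. Recall first the two objects concretely. By the construction \eqref{diag:F(M)tensor} and the description of cocartesian liftings in the proof of Proposition~\ref{prop:62}, the tensor product $X\otimes_M Y=p_!(X\times Y)$ is the contracted product $M\overset{M\times M}{\wedge}(X\times Y)$ along $p\colon M\times M\to M$, whose elements are classes $[m,x,y]$ subject to the generating identifications
\[
[m\,m_1 m_2,x,y]=[m,m_1\cdot x,m_2\cdot y]\qquad(m,m_1,m_2\in M),
\]
with left $M$-action $\bar m\cdot[m,x,y]=[\bar m m,x,y]$. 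On the other side, since $M$ is commutative each left $M$-set is also a right $M$-set, and $X\overset{M}{\wedge}Y$ carries the left $M$-action $m\cdot[x,y]=[m\cdot x,y]=[x,m\cdot y]$, the two expressions agreeing by the very relation $[m\cdot x,y]\sim[x,m\cdot y]$ defining the wedge; well-definedness of this action is where commutativity is first used.

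First I would check that $[x,y]\mapsto[1,x,y]$ is well defined and $M$-equivariant. Specialising the defining relation of $X\otimes_M Y$ at $m=1$ yields $[m_1,x,y]=[1,m_1\cdot x,y]$ (take $m_2=1$) and $[m_2,x,y]=[1,x,m_2\cdot y]$ (take $m_1=1$); combining these gives
\[
[1,m\cdot x,y]=[m,x,y]=[1,x,m\cdot y],
\]
so the generating pairs $[m\cdot x,y]\sim[x,m\cdot y]$ of the contracted product are sent to equal classes, and the map descends to the quotient. The same identity shows equivariance, since $m\cdot[1,x,y]=[m,x,y]=[1,m\cdot x,y]$.

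Next I would define the candidate inverse $\Psi\colon X\otimes_M Y\to X\overset{M}{\wedge}Y$ by $[m,x,y]\mapsto[m\cdot x,y]$ and check well-definedness: applying $\Psi$ to the two sides of the defining relation gives $[m m_1 m_2\cdot x,y]$ and $[m m_1\cdot x,m_2\cdot y]$, which coincide in $X\overset{M}{\wedge}Y$ once $m_2$ is moved across the wedge, rewriting $m m_1 m_2\cdot x$ as $m_2\cdot(m m_1\cdot x)$ by commutativity. Finally the two composites are identities: $\Psi([1,x,y])=[x,y]$ on one side, and $[m\cdot x,y]\mapsto[1,m\cdot x,y]=[m,x,y]$ on the other, the last equality being exactly the computation above. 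Hence the map is a bijection, and being equivariant it is an isomorphism of $M$-sets. There is no genuine obstacle here; the only point demanding care is the repeated appeal to commutativity of $M$, which is precisely what licenses treating $X$ and $Y$ as right $M$-sets and transporting scalars freely across both quotient relations.
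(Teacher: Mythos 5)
Your proof is correct. The paper gives no proof of this Proposition at all --- it is introduced as a ``straightforward result'' --- and your argument (the explicit two-sided inverse $[m,x,y]\mapsto[m\cdot x,y]$, the key identity $[1,m\cdot x,y]=[m,x,y]=[1,x,m\cdot y]$ obtained by specialising the defining relation of $p_!(X\times Y)$, and the commutativity checks that let $X$ be regarded as a right $M$-set) is precisely the routine verification the authors leave to the reader.
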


A monoid which supports an internal group structure is in fact an abelian group. In the following statement, for each abelian group $B$, we characterize $B$-torsors among $B$-sets. Recall that a $B$-set $X$ is called a $B$-torsor if it is not empty and the assignment
\begin{equation}\label{eq:trans}
B\times X\to X\times X\qquad (b,x)\mapsto (x,b\cdot x)
\end{equation}
is a bijection. We shall call such an object \emph{abelian torsor} and denote by $\mathsf{ATors}$ the full subcategory of $\mathsf{LMSet}$ with abelian torsors as objects.

\begin{Proposition} \label{prop:64}
Let $B$ be an abelian group. A $B$-set $X$ is a $B$-torsor if and only if it satisfies the following conditions:
\begin{enumerate}
 \item the terminal map $\tau_X\colon X \to I$ is $V$-cocartesian;
 \item the diagonal map $\Delta_X\colon X\to X\times X$ is $V$-cocartesian.
\end{enumerate}
\end{Proposition}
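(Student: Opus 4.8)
The plan is to read off both cocartesianness conditions from the explicit cocartesian liftings of Proposition~\ref{prop:62}, which are contracted products. Recall that a morphism $g$ of $\mathsf{LMSet}$ lying over a monoid homomorphism $f$ is $V$-cocartesian precisely when the canonical $V$-vertical comparison $\bar g\colon f_!(\mathrm{dom}\,g)\to\mathrm{cod}\,g$, determined by $\bar g\circ\hat f=g$, is an isomorphism, hence a bijection of underlying sets. First I would record that, $V$ being a cartesian monoidal opfibration, the terminal object is $I=(\{1\},\{\ast\})$, the trivial monoid acting on a one-point set, and $V$ preserves binary products; thus $\tau_X$ lies over $\tau_B\colon B\to\{1\}$ and $\Delta_X$ lies over $\Delta_B\colon B\to B\times B$. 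The strategy is then to compute the two comparison maps and to identify condition~$1$ with nonemptiness-plus-transitivity and condition~$2$ with the bijection~\eqref{eq:trans}.

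For condition~$1$, I would note that the right $B$-action on $\{1\}$ induced by $\tau_B$ is trivial, so $(\tau_B)_!(X)=\{1\}\overset{B}{\wedge}X$ is the set of $B$-orbits $X/B$. The comparison to the underlying set $\{\ast\}$ of $I$ is the unique map $X/B\to\{\ast\}$, which is a bijection exactly when $X/B$ is a singleton, that is, when $X$ is nonempty and $B$ acts transitively. Hence condition~$1$ is equivalent to ``$X$ nonempty and transitive''.

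The heart of the argument, and the main obstacle, is condition~$2$. Here the right $B$-action on $B\times B$ induced by $\Delta_B$ is the diagonal action $(b_1,b_2)\cdot b=(b_1b,b_2b)$; since $B$ is a group this action is free, and each orbit contains a unique representative of the form $(1,c)$ with $c=b_2b_1^{-1}$. I would use this normal form to produce a bijection $(\Delta_B)_!(X)=(B\times B)\overset{B}{\wedge}X\cong B\times X$ sending $[(1,c),x]$ to $(c,x)$. The comparison map $(\Delta_B)_!(X)\to X\times X$ is forced by $V$-verticality and $\bar g\circ\hat\Delta_B=\Delta_X$ to send $[(b_1,b_2),x]$ to $(b_1\cdot x,b_2\cdot x)$; transported along the normal form it becomes $(c,x)\mapsto(x,c\cdot x)$, which is precisely the assignment~\eqref{eq:trans}. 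Thus condition~$2$ holds if and only if~\eqref{eq:trans} is a bijection. The delicate points to get right will be the freeness of the diagonal action (needed for uniqueness of the normal form) and the bookkeeping identifying the comparison map with~\eqref{eq:trans}.

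Finally I would assemble the equivalence. By definition $X$ is a $B$-torsor iff it is nonempty and~\eqref{eq:trans} is a bijection. By the two computations above, conditions~$1$ and~$2$ together assert ``$X$ nonempty and transitive'' together with ``\eqref{eq:trans} bijective''. Since a bijective~\eqref{eq:trans} is in particular surjective, and surjectivity of $(b,x)\mapsto(x,b\cdot x)$ means that for every $x_1,x_2$ there is $b$ with $b\cdot x_1=x_2$, i.e.\ transitivity, the transitivity clause of condition~$1$ is automatic once condition~$2$ is assumed. Therefore conditions~$1$ and~$2$ together are equivalent to ``$X$ nonempty and~\eqref{eq:trans} bijective'', that is, to $X$ being a $B$-torsor.
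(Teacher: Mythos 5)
Your proof is correct and follows essentially the same route as the paper: both compute the cocartesian liftings as contracted products, identify condition~1 with nonemptiness plus transitivity, and reduce condition~2 to the bijectivity of \eqref{eq:trans} via the normal-form bijection $(B\times B)\overset{B}{\wedge}X\cong B\times X$ (your normal-form map is exactly the inverse of the paper's $\psi\colon(b,x)\mapsto[1,b,x]$). Your explicit closing remark that surjectivity of \eqref{eq:trans} makes the transitivity clause of condition~1 redundant is only implicit in the paper, but the substance is identical.
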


\begin{proof}
Let us consider a $B$-set $X$ and $\tau_B\colon B\to 0$ the terminal map in \Mon.

$\tau_X$ is $V$-cocartesian if and only if $(\tau_B)_{!}(X)=0\overset{B}{\wedge}X\cong I$, and this is equivalent to saying that $X$ is not empty and the function \eqref{eq:trans} is surjective.

Now take $X$ non-empty. $\Delta_X$ is cocartesian if and only if the comparison
\[
\phi\colon (\Delta_B)_{!}(X)=(B\times B)\overset{B}{\wedge} X\to X\times X
\]
is an isomorphism, and one easily computes:\[\phi([b_1,b_2,x])=(b_1\cdot x,b_2\cdot x)\,.\]
Now, since for all $b_1,b_2\in B$ and $x\in X$ $[b_1,b_2,x]=[1,b_2b_1^{-1},b_1\cdot x]$, the map $\psi\colon B\times X\to (B\times B)\overset{B}{\wedge} X$  given by the assignment $(b,x)\mapsto [1,b,x]$ is a bijection. So $\phi$ is an isomorphism if and only if $\psi\cdot\phi\colon B\times X\to X\times X$, $(b,x)\mapsto(x,b\cdot x)$, is bijective.
\end{proof}

The restriction of the forgetful functor $V$ to $\mathsf{ATors}$ yields a monomorphism of monoidal opfibrations
\[
\xymatrix{
\mathsf{ATors\ } \ar@{^{(}->}[r] \ar[d]_{\overline V} & \mathsf{LMSet} \ar[d]^V \\
\Ab\ \ar@{^{(}->}[r] & \Mon
}
\]
since contracted products of (abelian) torsors are still (abelian) torsors. Moreover, by Proposition \ref{prop:64}, $\overline V$ is actually the largest restriction of $V$ with base $\Ab$ with groupoidal fibres. Thanks to Corollary \ref{cor:groupoidal_fibres}, the corresponding pseudo\-functor $\mathsf{TORS}$ factors through symmetric 2-groups:
\[
\xymatrix@C=16ex{
&\SCG\ar[d]^U
\\
\Ab\ar[r]_{\mathsf{TORS}}\ar[ur]
&\CAT
}
\]
In particular, for an abelian group $B$, one obtains a symmetric 2-group structure on $\mathsf{TORS}(B)$. This is a special case of the classical 2-group of bitorsors over a (not necessarily abelian) group $B$, as studied by Breen in \cite{Breen}.


\subsection*{Acknowledgment}

Partial financial support was received from INdAM -- Istituto Nazionale di Alta Matematica ``Francesco Severi'' -- Gruppo Nazionale per le Strutture Algebriche, Geometriche e le loro Applicazioni.

\end{document}